\DeclareMathOperator{\GL}{GL}
\DeclareMathOperator{\Hom}{Hom}
\DeclareMathOperator{\End}{End}
\DeclareMathOperator{\Lie}{Lie}
\DeclareMathOperator{\Ker}{Ker}
\DeclareMathOperator{\Bor}{Bor}
\DeclareMathOperator{\rad}{rad}
\DeclareMathOperator{\Opp}{Opp}
\DeclareMathOperator{\Par}{Par}
\DeclareMathOperator{\Spec}{Spec}
\DeclareMathOperator{\Stab}{Stab}
\newcommand{\op}{\textup{op}}
\newcommand{\A}{\mathbb{A}}
\newcommand{\F}{\mathbb{F}}
\newcommand{\G}{\mathbb{G}}
\newcommand{\Gm}{\mathbb{G}_m}
\newcommand{\N}{\mathbf{N}}
\renewcommand{\P}{\mathbb{P}}
\newcommand{\Q}{\mathbb{Q}}
\newcommand{\R}{\mathbb{R}}
\newcommand{\V}{\mathbb{V}}
\newcommand{\Z}{\mathbb{Z}}
\renewcommand{\epsilon}{\varepsilon}
\newcommand{\df}{:=}
\newcommand{\too}{\longrightarrow}
\newcommand{\iso}{\simeq}
\begin{document}

\newtheoremstyle{theorem}{11pt}{11pt}{\itshape}{}{\bfseries}{.}{.5em}{}
\newtheoremstyle{note}{11pt}{11pt}{}{}{\bfseries}{.}{.5em}{}

\theoremstyle{theorem}
    \newtheorem{theorem}{Theorem}[section]
    \newtheorem{proposition}[theorem]{Proposition}
    \newtheorem{lemma}[theorem]{Lemma}
    \newtheorem{corollary}[theorem]{Corollary}
    \newtheorem*{claim}{Claim}

\theoremstyle{note}
    \newtheorem{definition}[theorem]{Definition}
    \newtheorem{remark}[theorem]{Remark}
    \newtheorem{example}[theorem]{Example}
    
      \numberwithin{equation}{subsection}

\title[Maximality of hyperspecial compact subgroups]{Maximality of hyperspecial compact subgroups avoiding Bruhat-Tits theory}

\author{Marco Maculan}

\address{Institut Math\'ematique de Jussieu\\ 
4 place Jussieu\\
75005 Paris (France)}

\email{marco.maculan@imj-prg.fr}

\keywords{reductive group, local field, Bruhat-Tits building, hyperspecial subgroup}

\subjclass{20E28, 20E42, 14L15, 14G20, 14M15}

%% RÈsumÈ
\begin{abstract}
Let $k$ be a complete non-archimedean field (non trivially valued). Given a reductive $k$-group $G$, we prove that hyperspecial subgroups of $G(k)$ (\textit{i.e.} those arising from reductive models of $G$) are maximal among bounded subgroups. The originality resides in the argument: it is inspired by the case of $\GL_n$ and avoids all considerations on the Bruhat-Tits building of $G$.
\end{abstract}

%% RÈsumÈ anglais
%\begin{altabstract}
%Soit $k$ un corps non-archim\'edien complet et non trivialement valu\'e. \'Etant donn\'e un $k$-groupe r\'eductif $G$, nous d\'emontrons que les sous-groupes hypersp\'eciaux de $G(k)$ (c'est-\`a-dire ceux qui proviennent des mod\`eles r\'eductifs de $G$) sont maximaux parmi les sous-groupes born\'es. La nouveaut\'e r\'eside dans l'argument: inspir\'e par le cas de $\GL_n$, il n'utilise pas la th\'eorie de Bruhat-Tits.
%\end{altabstract}

\maketitle

\section{Introduction}

\subsection{Background} Over the complex numbers, a connected linear algebraic group $G$ is reductive if and only if it contains a Zariski-dense compact subgroup. If $G$ is semi-simple such a subgroup corresponds to a maximal real Lie subalgebra of $\Lie G$ on which the Killing form is negative definite.

If one replaces the field of complex numbers by the field of $p$-adic ones (or, more generally, any finite extension of it) an analogue characterisation holds: a connected linear algebraic group $G$ is reductive if and only if $G(\Q_p)$ contains a maximal compact subgroup \cite[Propositions 3.15-16]{PlatonovRapinchuk}. In this case, a maximal compact subgroup is of the form $\mathcal{G}(\Z_p)$ for a suitable integral model $\mathcal{G}$ of $G$.

Reversing the logic one might wonder, given an integral model $\mathcal{G}$ of $G$, whether the compact subgroup $\mathcal{G}(\Z_p)$ is maximal. It has to be (according to work of Bruhat, Hijikata, Rousseau, Tits among others) if the special fibre of $\mathcal{G}$ is a reductive group over $\F_p$ -- the associated compact subgroup is then called \textit{hyperspecial}, whence the title of the article. The purpose of the present paper is to expound a proof of this result without using the theory of Bruhat-Tits building (and the combinatorics needed to construct it).

\subsection{Statement of the results} 
In order to be more precise and to state the main theorem in its full generality, let $k$ be a non-archimedean field (that we suppose complete and non-trivially valued), $k^\circ$ its ring of integers and $\tilde{k}$ its residue field. Let $\mathcal{G}$ be a reductive $k^\circ$-group\footnote{Let $G$ be a group $S$-scheme. We say that $G$ is \textit{reductive} (resp. \textit{semi-simple}) if it verifies the following conditions:
\begin{enumerate}
\item $G$ is affine and smooth over $S$;
\item for all $s \in S$, the $\bar{s}$-algebraic group $G_{\bar{s}} \df G \times_S \bar{s}$ is connected and reductive (resp. connected and semi-simple).
\end{enumerate} Here $\bar{s}$ denotes the spectrum of an algebraic closure of the residue field $\kappa(s)$ at $s$. See \cite[XIX, D\'efinition 2.7]{SGA3}.} and $G$ its generic fibre. The main result is the following:

\begin{theorem} \label{Thm:HyperspecialSubgroupIntro} The subgroup $\mathcal{G}(k^\circ)$ is a maximal bounded subgroup of $G(k)$.
\end{theorem}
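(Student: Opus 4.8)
The plan is to reduce the statement to a concrete boundedness/maximality assertion about a faithful representation, imitating the classical case of $\GL_n$ where $\GL_n(\Z_p)$ is maximal compact. The starting observation is that $\mathcal{G}(k^\circ)$ is bounded (being the $k^\circ$-points of an affine $k^\circ$-scheme of finite type) and that any bounded subgroup $H$ of $G(k)$ with $\mathcal{G}(k^\circ) \subseteq H$ sits inside $G(k)$; I want to show $H = \mathcal{G}(k^\circ)$. I would fix a closed immersion of $k^\circ$-group schemes $\rho \colon \mathcal{G} \hookrightarrow \GL(\mathcal{V})$ for a suitable finite free $k^\circ$-module $\mathcal{V}$, chosen so that $\mathcal{G}$ is the scheme-theoretic stabiliser (or the schematic closure in $\GL(\mathcal{V})$) of an appropriate tensor or line; the existence of such a representation for reductive group schemes over $k^\circ$ is the technical input I would cite or construct. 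The goal then becomes: an element $g \in G(k)$ lying in a bounded overgroup of $\mathcal{G}(k^\circ)$ must preserve the lattice $\mathcal{V} \subset V := \mathcal{V} \otimes_{k^\circ} k$, i.e. $\rho(g) \in \GL(\mathcal{V})(k^\circ)$, whence $g \in \mathcal{G}(k^\circ)$ by the stabiliser description.

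\smallskip

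The heart of the argument is the \emph{lattice-stability step}. Given $g$ in a bounded subgroup $H \supseteq \mathcal{G}(k^\circ)$, the lattice $\mathcal{L} := \rho(g) \cdot \mathcal{V}$ is another $k^\circ$-lattice in $V$, and I want $\mathcal{L} = \mathcal{V}$. Boundedness of $H$ guarantees that the lattices $\{\rho(h)\cdot \mathcal{V} : h \in H\}$ are uniformly bounded, so they lie between $\pi^{-N}\mathcal{V}$ and $\pi^{N}\mathcal{V}$ for some $N$ (with $\pi$ a uniformiser); the problem is to upgrade this uniform boundedness to genuine equality with $\mathcal{V}$. Here is where I would exploit reductivity of the \emph{special fibre}: the elementary divisors of $\mathcal{L}$ relative to $\mathcal{V}$ are governed by a cocharacter $\lambda$ of a maximal torus, and the $\mathcal{G}(k^\circ)$-orbit of $g$ lets me move $g$ into a Cartan-type position. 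The reductivity of $\mathcal{G}_{\tilde k}$ forces the relevant cocharacter to extend over $k^\circ$, which — combined with the fact that conjugation by $\mathcal{G}(k^\circ)$ preserves $H$ and hence preserves the collection of elementary divisors — pins the divisors to be trivial. Concretely I expect to use a Cartan/Iwasawa-type decomposition \emph{internal to the representation} (not to the building): write $\rho(g)$ in terms of the $\GL(\mathcal{V})(k^\circ)$-double coset it defines, and show the reductive integral structure prevents any nontrivial double coset from normalising the group generated by $\mathcal{G}(k^\circ)$ while staying bounded.

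\smallskip

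\textbf{The main obstacle} I anticipate is precisely this last point: ruling out nontrivial elementary divisors without invoking the affine Weyl group combinatorics that Bruhat–Tits theory would supply. In the $\GL_n$ case one argues directly that a bounded group stabilising a lattice up to bounded distortion must fix a lattice, but for a general reductive $\mathcal{G}$ the subtlety is that $g$ may act with nontrivial elementary divisors on $V$ yet still normalise $\mathcal{G}(k^\circ)$ abstractly. To close this gap I would leverage the reductive model structurally: the key is that $\mathcal{G}_{\tilde k}$ being reductive means the reduction map $\mathcal{G}(k^\circ) \to \mathcal{G}(\tilde k)$ is surjective with the expected kernel, and the special fibre carries no unipotent radical that a distorting cocharacter could hide in. I expect the cleanest route is to show that if $g \notin \mathcal{G}(k^\circ)$ then some cocharacter $\lambda$ of $\mathcal{G}$ (reducing nontrivially) produces, via the $\Gm$-action on $V$, an unbounded sequence $\lambda(\pi)^n \cdot g \cdot \lambda(\pi)^{-n}$ inside $H$, contradicting boundedness — an argument in the spirit of the dynamical/contraction methods for parabolic subgroups. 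Verifying that such a contracting cocharacter always exists for $g \neq e$ in the reductive (as opposed to merely smooth affine) setting is the delicate step, and is exactly where the hypothesis on the special fibre does the work that the building normally does.
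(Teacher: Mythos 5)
There is a genuine gap, and it sits exactly where you located the ``main obstacle''. Your closing mechanism --- ``if $g \notin \mathcal{G}(k^\circ)$ then some cocharacter $\lambda$ produces an unbounded sequence $\lambda(\pi)^n g \lambda(\pi)^{-n}$ inside $H$'' --- cannot work as stated. For the conjugates to lie in $H$ you need $\lambda(\pi) \in H$; but for a nontrivial cocharacter $\lambda$ the element $\lambda(\pi)$ generates an unbounded cyclic group, so it lies in no bounded subgroup, in particular not in $H$. Conversely, the only conjugators you are entitled to use are elements of $H$ itself, and conjugation of a bounded group by its own elements never produces unboundedness. The same objection defeats the ``elementary divisor'' step: a semisimple $g$ in a maximal torus commutes with all cocharacters of that torus, so no contraction argument sees it, and for such $g$ the correct obstruction is simply that $\langle g \rangle$ is unbounded (this is the content of Proposition \ref{Prop:MaxCompactSubgroupTorus}); for general $g$ the unboundedness of $\langle \mathcal{G}(k^\circ), g\rangle$ must be extracted from \emph{words} in $g$ and $\mathcal{G}(k^\circ)$, which is precisely the combinatorial content (Cartan decomposition) that Bruhat--Tits theory supplies and that your sketch does not replace. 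In short: the reduction to ``$\rho(g)$ preserves the lattice $\mathcal{V}$'' is a correct reformulation, but the proposal contains no working device to prove it.

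What is missing is the analogue of steps (2)--(3) of the $\GL_n$ argument, which is where the paper's proof actually lives. One does not attack $g$ directly; instead one forms the sup-metric $\|\cdot\|_H = \sup_{h\in H}\|h\cdot(-)\|_{\mathcal{L}}$ on the anticanonical bundle of the flag variety $X=\Bor(G)$, observes that the ratio $\|\cdot\|_H/\|\cdot\|_{\mathcal{L}}$ is a $\mathcal{G}(k^\circ)$-invariant function on $X(k)$, and uses the transitivity of $\mathcal{G}(k^\circ)$ on $X(k)$ (Proposition \ref{Prop:TransitiveActionOnBorelVariety}) to conclude that it is constant, hence $H \subset \Stab_{G(k)}(\|\cdot\|_{\mathcal{L}})$. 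The whole theorem then reduces to computing this stabiliser (Theorem \ref{Thm:StabiliserMetricRational}), which is done via the Iwasawa decomposition $G(k)=\mathcal{G}(k^\circ)\cdot B(k)$, the splitting $B = T \ltimes \rad^u(B)$, Proposition \ref{Prop:MaxCompactSubgroupTorus} for the torus part, and the Schubert-variety description of the nonvanishing locus of the $B$-eigenvector (Proposition \ref{Prop:MetricOfTheEigenvector}) for the unipotent part. Your proposal contains none of these ingredients, and it also does not address the non-quasi-split case, where there is no Borel subgroup over $k$ and the paper must pass to an analytic extension via holomorphically convex envelopes (Proposition \ref{Prop:HolomorphicallyConvexEnvelopeRationalPoints}).
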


When $G$ is split this theorem can be deduced from \cite[\S 3.3 and \S8.2]{BruhatTitsI} taking in account Exemple 6.4.16 (b), \textit{loc.cit.}. Note that, under the hypothesis of $G$ being split, Theorem \ref{Thm:HyperspecialSubgroupIntro} is due to Bruhat over a $p$-adic field \cite{BruhatBourbaki, BruhatBruxelles}. The quasi-split case, \textit{i.e.} when $G$ contains a Borel subgroup defined over $k$, is covered by \cite[Th\'eor\`eme 4.2.3]{BruhatTitsII} and the general case, by \cite[Th\'eor\`eme 5.1.2]{RousseauThese} (the existence of the reductive model $\mathcal{G}$ implies that $G$ splits over a non-ramified extension \cite[Theorem 6.1.16]{ConradSGA3}).

%When $G$ is not quasi-split -- this forces the residue field $\tilde{k}$ to be of cohomological dimension $> 1$ (\textit{cf}. \cite[\S 2.2, III, Th\'eor\`eme 1 and \S 2.3, Corollaire 1]{CohomologieGaloisienne} or Proposition \ref{Prop:GroupIsQuasiSplit} in the present note), hence infinite -- we deduce Theorem \ref{Thm:HyperspecialSubgroupIntro} from its analogue for Berkovich spaces (see Section \ref{sec:InfiniteResidueField}):

\subsection{}
The subgroups of $G(k)$ of the form $\mathcal{G}(k^\circ)$ are called hyperspecial. When the residue field $\tilde{k}$ is finite, the existence of a $k^\circ$-reductive model $\mathcal{G}$ of $G$ is equivalent to $G$ being quasi-split over $k$ and being split over a non-ramified extension \cite[Theorem 2.6]{ConradSmoothRep}. In particular, although maximal compact subgroups always exist for an arbitrary reductive group over a locally compact field, hyperspecial subgroups do not.

Hyperspecial subgroups are anyway crucial objects in the study of representations of $p$-adic groups and, even though Theorem \ref{Thm:HyperspecialSubgroupIntro} is a basic result, all the proofs I am aware of rely on the deep knowledge of the combinatorics of $G(k)$ which comes at the end of Bruhat-Tits theory. More precisely, one sees $\mathcal{G}(k^\circ)$ as the stabiliser of a (hyperspecial) vertex of the Bruhat-Tits building $\mathcal{B}(G, k)$, which is a maximal bounded subgroup. This exploits implicitly that the integral model $\mathcal{G}$ induces a Tits system on $G(k)$ (when $k$ is discretely valued) and a valued root datum of $G(k)$ (when the valuation is dense). %If the group $G$ is quasi-split, the existence of a Tits system had been proved by Hijikata \cite{hijikata}.

Instead, the proof of Theorem \ref{Thm:HyperspecialSubgroupIntro} presented here elaborates on the argument for the case $\GL_n$, using tools from algebraic geometry involving flag manifolds of $G$. When $k$ is a $p$-adic field the advantage of the present approach is that it avoids all the computations contained in \cite{Hijikata} needed in order to show that $G(k)$ admits a Tits system. Let us recall that for the general linear group the proof of Theorem \ref{Thm:HyperspecialSubgroupIntro} goes as follows:

% We recall that, if we write $G = \GL_{n, k}$, it goes as follows:
\begin{enumerate}
\item Consider the norm $\| (x_1, \dots, x_n) \| = \max \{ |x_1|, \dots, |x_n|\} $ on $k^{n}$. The subgroup of $\GL_n(k)$ of elements letting $\| \cdot \|$ invariant is $\GL_n(k^\circ)$.

\item \smallskip If $H$ is a bounded subgroup containing $\GL_n(k^\circ)$ consider the norm $\| \cdot \|_H$ defined for every $x \in k^n$ by
$ \| x \|_H \df \sup_{h \in H} \| h(x)\|$. The ratio of the norms $\| \cdot \|_H /\|\cdot \|$ gives rise to a well-defined function $ \phi \colon \P^{n-1}(k) \to \R_+$ which  is clearly $\GL_n(k^\circ)$-invariant.

\item \smallskip Since the group $\GL_n(k^\circ)$ acts transitively on $\P^{n-1}(k)$, $\phi$ must be constant. In particular, $H$ is contained in $\GL_n(k^\circ)$.
\end{enumerate}

\subsection{} 
\par The problem with passing from $\GL_{n}$ to an arbitrary reductive $k$-group $G$ is that the latter does not have a canonical representation on which one can consider norms. We prefer to interpret $\P^{n-1}$ as a flag variety of $\GL_n$ and the norm $\| \cdot \|$ as the metric that it induces on the line bundle $\mathcal{O}(1)$. Moreover, we think at the latter as the metric naturally induced by the line bundle $\mathcal{O}(1)$ on $\P^{n-1}$ over the ring of integers $k^\circ$.

When treating the case of an arbitrary reductive $k^\circ$-group $\mathcal{G}$ of generic fibre $G$, this suggests to replace:
\begin{itemize}
\item  the projective space $\P^{n-1}_k$ by the variety $X = \Bor(G)$ of Borel subgroups of $G$;
\item  \smallskip the line bundle $\mathcal{O}(1)$ by the anti-canonical bundle $L = - \textup{K}_X$ of $X$; 
\item \smallskip the norm $\| \cdot \|$ by the metric $\| \cdot \|_{\mathcal{L}}$ on $L$ induced by the line bundle 
$$ \mathcal{L} = (\det \Omega^1_{\mathcal{X} / k^\circ})^\vee \otimes \alpha^\ast (\det \Lie \mathcal{G})^\vee$$
on the $k^\circ$-scheme of Borel subgroups $\mathcal{X} = \Bor(\mathcal{G})$ of $\mathcal{G}$, where $\alpha$ is the structural morphism of $\mathcal{X}$ and $\Lie \mathcal{G}$ the Lie algebra of $\mathcal{G}$.
\end{itemize}

\smallskip 
Note that, when $G = \GL_n$, these new choices do not correspond to the original ones so that even in this case we get a new (but slightly more complicated) proof.

The construction of the metric $\| \cdot \|_{\mathcal{L}}$ is inspired by the embedding of the Bruhat-Tits building in the flag varieties defined by Berkovich and R\'emy-Thuillier-Werner \cite{RemyThuillierWerner}.

The anti-canonical bundle $L$ of $X$ has a natural structure of $G$-linearised sheaf, that is, $G$ acts linearly on the fibres on $L$ respecting the action on $X$. We can therefore consider the stabiliser $\Stab_{G(k)}(\| \cdot \|_{\mathcal{L}})$ in $G(k)$ of the metric $\| \cdot \|_{\mathcal{L}}$ (see also paragraph \ref{par:Definitions}).

\begin{theorem} \label{Thm:StabiliserMetricRational} Let us suppose that $G$ is semi-simple and quasi-split. Then,
$$ \Stab_{G(k)}(\| \cdot \|_{\mathcal{L}}) = \mathcal{G}(k^\circ). $$
\end{theorem}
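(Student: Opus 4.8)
The plan is to establish the two inclusions separately, the inclusion $\mathcal{G}(k^\circ) \subseteq \Stab_{G(k)}(\| \cdot \|_{\mathcal{L}})$ being the elementary one. Since $G$ is semi-simple, the adjoint action on $\det \Lie G$ is trivial (the group is connected and preserves the Killing form), so the generic fibre of $\mathcal{L}$ is the anti-canonical bundle $L = -K_X$ with its canonical $G$-linearisation; in particular $\| \cdot \|_{\mathcal{L}}$ really is a metric on the linearised bundle $L$. A point $g \in \mathcal{G}(k^\circ)$ acts on $\mathcal{X} = \Bor(\mathcal{G})$ by a $k^\circ$-automorphism, hence on $\Omega^1_{\mathcal{X}/k^\circ}$ and on $\Lie \mathcal{G}$, and therefore preserves $\mathcal{L}$ together with its integral structure. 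As $\| \cdot \|_{\mathcal{L}}$ is read off the model $(\mathcal{X}, \mathcal{L})$, it is preserved by $g$, giving the first inclusion.

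For the reverse inclusion I would first reconstruct the integral model from the metric. The bundle $L = -K_X$ (attached to the character $2\rho$) is ample, and by Kempf vanishing $H^i(\mathcal{X}_s, \mathcal{L}^{\otimes d}_s) = 0$ for every $i > 0$, $d \ge 0$ and every fibre $\mathcal{X}_s$; cohomology and base change then show that $R \df \bigoplus_{d \ge 0} H^0(\mathcal{X}, \mathcal{L}^{\otimes d})$ is a graded $k^\circ$-algebra, free in each degree, with $\mathcal{X} = \mathrm{Proj}\, R$ and $\mathcal{L} = \mathcal{O}(1)$. Because $\mathcal{X}$ is smooth, hence normal, its degree-$d$ piece is exactly the unit ball $\{ s \in H^0(X, L^{\otimes d}) : \| s \|_{\mathcal{L}^{\otimes d}, \sup} \le 1 \}$ for the sup-metric; this is the incarnation, for the anti-canonical polarisation, of step (1) of the $\GL_n$ argument. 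Consequently, if $g \in G(k)$ preserves $\| \cdot \|_{\mathcal{L}}$, the automorphism it induces on each $H^0(X, L^{\otimes d})$ through the linearisation preserves these unit balls, hence preserves $R$ as a graded $k^\circ$-algebra; taking $\mathrm{Proj}$, the action of $g$ on $X$ extends to a $k^\circ$-automorphism $\tilde{g}$ of $\mathcal{X}$.

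Next I would identify this automorphism group-theoretically. The automorphism functor $\underline{\mathrm{Aut}}_{\mathcal{X}/k^\circ}$ is representable by a group scheme locally of finite type over $k^\circ$ (Grothendieck), and by Demazure's theorem its fibrewise identity component is the adjoint group $\mathcal{G}^{\mathrm{ad}}$, the component group being the finite étale group of diagram automorphisms. The generic fibre of $\tilde{g}$ is the image $\pi(g)$ of $g$ under the central isogeny $G \to G^{\mathrm{ad}}$, which is inner, so the class of $\tilde{g}$ in the component group is trivial over $k$; as that component group is finite, hence proper, over $k^\circ$, the valuative criterion forbids the specialisation from jumping to an outer component, whence $\tilde{g} \in \mathcal{G}^{\mathrm{ad}}(k^\circ)$, i.e. $\pi(g) \in \mathcal{G}^{\mathrm{ad}}(k^\circ)$. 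This replaces step (3): the transitivity of $\GL_n(k^\circ)$ on $\P^{n-1}(k)$ becomes the statement that integral automorphisms of the flag variety are the integral points of the adjoint group.

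Finally I would lift from $\mathcal{G}^{\mathrm{ad}}$ to $\mathcal{G}$: pulling back the isogeny $\mathcal{G} \to \mathcal{G}^{\mathrm{ad}}$ along $\pi(g) \in \mathcal{G}^{\mathrm{ad}}(k^\circ)$ yields a torsor $T$ under the centre $\mathcal{Z}$, which is finite flat (the group being semi-simple) hence proper over $k^\circ$; the point $g \in \mathcal{G}(k) = G(k)$ gives an element of $T(k) = T(k^\circ)$, so $g \in \mathcal{G}(k^\circ)$. The same argument applied to $\mathcal{Z}$ shows $Z(k) = \mathcal{Z}(k^\circ) \subseteq \mathcal{G}(k^\circ)$, consistently with the centre acting trivially on $(X, L)$ and thus lying in the stabiliser. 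The main obstacle I expect is the reconstruction step: proving cleanly that $\| \cdot \|_{\mathcal{L}}$ determines $R$ — equivalently that preserving the metric forces preserving the integral sections in every degree — and that the resulting graded automorphism descends to a genuine $k^\circ$-automorphism of $\mathcal{X}$; the relative identification $\mathcal{G}^{\mathrm{ad}} = \underline{\mathrm{Aut}}^0_{\mathcal{X}/k^\circ}$ is a second delicate point, reducible to Demazure's theorem over the two residue fields together with smoothness of the automorphism scheme.
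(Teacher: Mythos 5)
Your first inclusion is fine, and so is the final descent step (a torsor under the finite flat centre over $k^\circ$ has the same $k$- and $k^\circ$-points because $k^\circ$ is integrally closed in $k$). The gap is in the reconstruction step, and it occurs precisely in the principal case of the theorem. You assert that the lattice $H^0(\mathcal{X},\mathcal{L}^{\otimes d})$ is exactly the unit ball $\{s\in H^0(X,L^{\otimes d}) : \|s\|_{\mathcal{L}^{\otimes d}}(x)\le 1 \text{ for all }x\in X(k)\}$. The inclusion of the lattice into the unit ball is clear, but the reverse inclusion needs $\tilde X(\tilde k)$ to be Zariski-dense in $\tilde X$. Indeed, for $s_0$ in the lattice one has $\|s_0\|_{\mathcal{L}^{\otimes d}}(x)=1$ exactly when the reduction $\tilde s_0$ does not vanish at $\tilde x\in\tilde X(\tilde k)$; so if $\tilde s_0\neq 0$ vanishes at \emph{every} $\tilde k$-point of $\tilde X$, then (say for $k$ discretely valued with uniformiser $\pi$) the section $\pi^{-1}s_0$ lies in the unit ball but not in the lattice. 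When $\tilde k$ is finite --- i.e.\ when $k$ is a $p$-adic field, which is both the case of main interest and one where $G$ is automatically quasi-split --- the set $\tilde X(\tilde k)$ is a finite set of closed points of a positive-dimensional variety, hence not dense, and for $d\gg 0$ one has $h^0(\tilde X,\tilde L^{\otimes d})>\#\tilde X(\tilde k)$, so such $\tilde s_0$ exist (and lift to $H^0(\mathcal{X},\mathcal{L}^{\otimes d})$ by the same base-change argument you invoke). Thus the unit ball strictly contains the lattice, the graded algebra $R$ is not determined by the metric restricted to $X(k)$, and the integral automorphism $\tilde g$ of $\mathcal{X}$ cannot be produced this way. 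The correct statement is that the lattice is the unit ball for the sup over the whole analytic space $X^{\mathrm{an}}$ (equivalently, over the points reducing to the generic points of $\tilde X$), but invariance of $\|\cdot\|_{\mathcal{L}}$ on $X(k)$ does not obviously propagate to those points; supplying that propagation is essentially the hard content.

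The paper avoids reconstruction altogether. It first shows the stabiliser $S$ is bounded (Proposition \ref{Lemma:StabiliserIsCompactRational}), then uses the Iwasawa decomposition $G(k)=\mathcal{G}(k^\circ)\cdot B(k)$ to reduce to computing $S\cap B(k)$; the torus part is handled by Proposition \ref{Prop:MaxCompactSubgroupTorus}, and the unipotent part by the single $B$-eigensection $s$ of $\mathcal{L}$, for which $\{x\in X(k):\|s\|_{\mathcal{L}}(x)=1\}=\Opp(\mathcal{B},k^\circ)$ (Proposition \ref{Prop:MetricOfTheEigenvector}); stability of this set under left translation then forces $S\cap\rad^u(B,k)\subset\rad^u(\mathcal{B},k^\circ)$. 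Note that this only uses the value of the metric on one distinguished section, not the full integral structure of every $H^0(X,L^{\otimes d})$. To salvage your approach you would have to either restrict the reconstruction to infinite residue fields and argue separately when $\tilde k$ is finite, or find another route to the integral extension $\tilde g$; the subsequent identification of $\underline{\mathrm{Aut}}^0_{\mathcal{X}/k^\circ}$ with $\mathcal{G}^{\mathrm{ad}}$ over $k^\circ$, which you flag as delicate, would then still need to be justified.
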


\subsection{} 
Theorem \ref{Thm:StabiliserMetricRational} is the critical result that we need to prove Theorem \ref{Thm:HyperspecialSubgroupIntro} when $G$ is quasi-split. It corresponds indeed to step (1) in the proof in the case $\mathcal{G} = \GL_{n, k^\circ}$, whereas step (2) is trivial and step (3) is a standard fact in the theory of reductive $k^\circ$-groups (see Proposition \ref{Prop:TransitiveActionOnBorelVariety}). 

To show Theorem \ref{Thm:StabiliserMetricRational} we reduce the problem to studying the intersection of the stabiliser with the unipotent radical $\rad^u(B)$ of a Borel subgroup $B$ of $G$. Then, identifying $\rad^u(B)$ with the open subset $\Opp(B)$ of Borel subgroups opposite to $B$, it remains to understand the behaviour of the metric $\| \cdot \|_{\mathcal{L}}$ on $\Opp(B, k)$: this boils down to a basic fact in the theory of Schubert varieties over the residue field $\tilde{k}$ (see Proposition \ref{Prop:MetricOfTheEigenvector}).

\subsection{} 
Instead if $G$ is not quasi-split (hence the residue field infinite), then $X(k)$ is empty by definition and the metric $\| \cdot \|_{\mathcal{L}}$ gives no information. To get round this problem we remark that, for every analytic extension\footnote{\textit{i.e.} a complete valued field endowed with an isometric embedding $k \to K$.} $K$ of $k$, $\mathcal{G}(K^\circ)$ is the $K$-holomorphically convex envelope of $\mathcal{G}(k^\circ)$ (see Definition \ref{def:HolomorphicallyConvexEnvelope}). The key point here is that, the residue field being infinite, the $\tilde{k}$-valued points of $\mathcal{G}$ are Zariski-dense in the special fibre of $\mathcal{G}$.

Then, choosing an analytic extension $K$ that splits $G$, we deduce the maximality of $\mathcal{G}(k^\circ)$ from the maximality of $\mathcal{G}(K^\circ)$,  which holds by the quasi-split case. 

\subsection{} 
To conclude let us remark that the construction of the metric can be generalised to any type of parabolic subgroups of $G$. When $\mathcal{G}$ is semi-simple and the type is $k$-rational\footnote{Namely, the corresponding connected component of the variety of parabolic subgroups $\Par(G)$ has a $k$-rational point.} and non-degenerate\footnote{That is, the restriction of a parabolic subgroup of type $t$ to every quasi-simple factor $H$ of $G$ is not the whole $H$ (see \cite[3.1]{RemyThuillierWerner}).}, the stabiliser is still $\mathcal{G}(k^\circ)$. Since these facts are of no use in the present paper we do not treat them.

\subsection{Organisation of the paper} In section \ref{par:Notation} we introduce the notations that we use throughout the paper and we recall some basic facts on reductive groups and Berkovich spaces. In section \ref{sec:ReductionToStabilisers} we show how
Theorem \ref{Thm:HyperspecialSubgroupIntro} follows from Theorem \ref{Thm:StabiliserMetricRational} when $G$ is quasi-split. The proof of Theorem \ref{Thm:HyperspecialSubgroupIntro} for $G$ quasi-split is given in section \ref{sec:ProofOfMainTheoremRational}, based on some preliminary facts established in section \ref{sec:PreliminaryFacts}. Finally in section \ref{sec:InfiniteResidueField} we show how to reduce to the quasi-split case.

\subsection{Acknowledgements} I would like to thank B. R\'emy for the interest he showed on this result, the accurate reading of a first draft of this paper and his commentaries. I warmly thank A. Thuillier for interesting discussions, M. Brion for pointing out to me references for Lemma \ref{Lemma:NonVanishingLocusOfTheEigenvector} and B. Conrad for giving me an alternative argument for 
Proposition \ref{Prop:TransitiveActionOnBorelVariety}. I also thank G. Ancona for the valuable suggestions about the presentation. 

I would like to thank the referee for its careful reading, his advices on the presentation and the suggestion to prove Theorem \ref{Thm:StabiliserMetricRational} when the group is quasi-split which permitted me to give a more elementary treatment.\footnote{In a previous version of the paper I proved Theorem \ref{Thm:StabiliserMetricRational} only when the residue field was finite and used Berkovich geometry when the residue field was infinite.} 

\section{Notations, reminders and definitions} \label{par:Notation} 

\subsection{Notations and conventions} Let us list some notations that we use throughout the paper:

\begin{itemize}
\item $k$ is a non-archimedean field, $k^\circ$ its ring of integers and $\tilde{k}$ its residue field; 
\item \smallskip $\mathcal{G}$ is a reductive $k^\circ$-group; 
\item \smallskip $\mathcal{X}$ is the $k^\circ$-scheme of Borel subgroups $\Bor(\mathcal{G})$, that is, the $k^\circ$-scheme representing the functor that associates to a $k^\circ$-scheme $S$ the set of Borel subgroups of the reductive $S$-group $\mathcal{G} \times_{k^\circ} S$ (\textit{cf.} \cite[XXII, Corollaire 5.8.3]{SGA3}); 
\item \smallskip $\mathcal{L}$ is the invertible sheaf $(\det \Omega_{\mathcal{X} / k^\circ})^\vee \otimes  \alpha^\ast (\det \Lie \mathcal{G})^\vee$ on $\mathcal{X}$ (it is a line bundle because $\mathcal{X}$ is smooth by \textit{loc.cit.}), where $\alpha$  is the structural morphism of $\mathcal{X}$ over $\Spec k^\circ$; 
%\item for a Borel subgroup $\mathcal{B}$ of $\mathcal{G}$, $\Opp(\mathcal{B})$ is the open subset of $\mathcal{X}$ made of Borel subgroups opposite to $\mathcal{B}$;
\item \smallskip $G$, $X$, $L$  are respectively the generic fibre of $\mathcal{G}$, $\mathcal{X}$, $\mathcal{L}$  and by $\tilde{G}$, $\tilde{X}$, $\tilde{L}$ their special fibre; 
\item \smallskip $\| \cdot \|_{\mathcal{L}}$ is the metric on $L$ associated to $\mathcal{L}$, that we consider as a continuous function $\| \cdot \|_\mathcal{L} \colon \V(L, k) \to \R_+$ (see definition in paragraph \ref{par:Definitions}). 
\item \smallskip for every Borel subgroup $\mathcal{B}$ we denote by $\Opp(\mathcal{B})$ the $k^\circ$-scheme of Borel subgroups opposite to $\mathcal{B}$, that is, the $k^\circ$-scheme representing the functor that associates to a $k^\circ$-scheme $S$ the set of Borel subgroups of  $\mathcal{G}_S \df \mathcal{G} \times_{k^\circ} S$ such that the intersection with $\mathcal{B} \times_{k^\circ} S$ is a maximal $S$-torus of $\mathcal{G}_S$.  A similar notation is also used for Borel subgroups of $G$ and $\tilde{G}$ (\textit{cf.} \cite[XXII, Proposition 5.9.3 (ii)]{SGA3}). 
\item \smallskip  In this paper we refer to \cite[Corollaire 1.11]{SGA3ExpXI} as ``Hensel's Lemma''.
\end{itemize}

\subsection{Reminders} 

\begin{itemize}
\item For a reductive group over a general base, the notion of quasi-split is fairly involved \cite[XXIV, 3.9]{SGA3}. Nonetheless, thanks to \cite[XXIV, Proposition 3.9.1]{SGA3}, the $k^\circ$-reductive group $\mathcal{G}$ is quasi-split if and only if $G$ is. 

\item \smallskip The $k^\circ$-scheme $\mathcal{X}$ is projective and smooth (see \cite[Theorem 5.2.11]{ConradSGA3} or \cite[XXII, 5.8.3 (i)]{SGA3}) and the invertible sheaf $\mathcal{L}$ is ample. Indeed, $\mathcal{L}$ can also be constructed as follows: if $\mathcal{U} \to \mathcal{X}$ is the universal Borel subgroup and $\Lie \mathcal{U}$ is the Lie algebra of $\mathcal{U}$, then $\mathcal{L}$ is the dual of $\det \Lie \mathcal{U}$ \cite[Theorem 2.3.6 and Remark 2.3.7]{ConradSGA3}. 

This construction also shows that the adjoint action of $\mathcal{G}$ induces a natural equivariant action of $\mathcal{G}$ on $\mathcal{L}$ \cite[I, D\'efinition 6.5.1]{SGA3}. The equivariant action on $\mathcal{L}$ induces for all integer $n$ a linear action of $\mathcal{G}$ on the global sections $H^0(\mathcal{X}, \mathcal{L}^{\otimes n})$ \cite[I, Lemme 6.6.1]{SGA3}. We always consider these actions as tacitly understood. 

\item \smallskip For a Borel subgroup $\mathcal{B}$ of $\mathcal{G}$ the scheme $\Opp(\mathcal{B})$ of Borel subgroups of $\mathcal{G}$ opposite to $\mathcal{B}$ is an open affine subscheme of $\mathcal{X} = \Bor(\mathcal{G})$ \cite[XXVI, Corollaire 4.3.4 and Corollaire 4.3.5]{SGA3}.

\item \smallskip The total space of $L$ is the $k$-scheme $\V(L)$ representing the functor that associates to a $k$-scheme $S$ the set of couples $(x, s)$ made of a $S$-valued point $x \colon S \to X$ and a section $s \in H^0(S, x^\ast L)$ \cite[1.7.10]{EGA2}.\footnote{In \textit{loc.cit.} the $k$-scheme $\V(L)$ is denoted $\V(L^\vee)$.}

\end{itemize}

\subsection{Definitions} \label{par:Definitions}

\begin{itemize}
\item A subset $S \subset G(k)$ is said to be \textit{bounded} if there exists a closed embedding $G \subset \A^{n}_k$  such that $S$ in contained in $\A^n(k^\circ)$ (this generalises \cite[1.1, Definition 2]{NeronModels} when $k$ is not discretely valued). 

\item  \smallskip A \textit{metric} on $L$ is a function $\| \cdot \| \colon \V(L, k) \to \R_+$, $(x, s) \mapsto \| s\|(x)$ verifying the following properties for all $k$-points $(x, s)$ of $\V(L)$:
\begin{itemize}
\item \smallskip $\| s\|(x) = 0$ if and only if $s = 0$;
\item \smallskip $\| \lambda s \| (x) = |\lambda| \| s\|(x)$ for all $\lambda \in k$.
\end{itemize} \smallskip

\item \smallskip The metric $\| \cdot \|_{\mathcal{L}}$ is defined as follows. A $k$-point of $\V(L)$ corresponds to the data of a point $x \in X(k)$ and a section $s \in x^\ast L$. By the valuative criterion of properness, the point $x$ lifts to a unique morphism of $k^\circ$-schemes $\epsilon_x \colon \Spec k^\circ \to \mathcal{X}$ and the $k^\circ$-module $\epsilon_x^\ast \mathcal{L}$ is free of rank $1$ (thus it is a lattice the $K$-line $x^\ast L$). Pick a generator $s_0$ of the $k^\circ$-module $\epsilon_x^\ast \mathcal{L}$ and set, for all $s = \lambda s_0$ with $\lambda \in k$,
$$ \| s \|_{\mathcal{L}}(x) \df |\lambda|.$$
The real number $ \| s \|_{\mathcal{L}}(x)$ does not depend on the chosen generator $s_0$, so this gives a well-defined function $\| \cdot \|_\mathcal{L} \colon \V(L, k) \to \R_+$, 
$$ \| (x, s) \|_{\mathcal{L}} \df \| s\|_\mathcal{L}(x). $$

It is easily seen that $\| \cdot \|_\mathcal{L}$ is continuous on $\V(L, k)$ and bounded on bounded subsets. Similarly, for every integer $n$, one constructs the metric $\| \cdot \|_{\mathcal{L}^{\otimes n}}$ on $L^{\otimes n}$ associated to $\mathcal{L}^{\otimes n}$. 

\item \smallskip The group $G(k)$ acts on the set of metrics on $L$. Indeed, given a metric $\| \cdot \|$ and $g \in G(k)$, the function $(x, s) \mapsto \| g^{-1} \cdot (x, s)\|$ is again a metric (because $G(k)$ acts linearly on the fibres of $L$).

\smallskip
We denote by $\Stab_{G(k)}(\| \cdot \|_{\mathcal{L}})$ the stabiliser of the metric $\| \cdot \|_{\mathcal{L}}$ with respect to this action. More explicitly, $\Stab_{G(k)}(\| \cdot \|_{\mathcal{L}})$ is the set of points $g \in G(k)$ such that, for all $k$-points $(x, s)$ of $\V(L)$, we have
$$ \| g \cdot s \|_\mathcal{L}(g \cdot x) = \| s \|_{\mathcal{L}}(x).$$
\end{itemize}

\section{Proof of Theorem \ref{Thm:HyperspecialSubgroupIntro} in the quasi-split case} \label{sec:ReductionToStabilisers}

In this section we admit temporarily Theorem \ref{Thm:StabiliserMetricRational} and we prove the following:

\begin{theorem} \label{Thm:HyperspecialSubgroupQuasiSplit} Let us suppose $\mathcal{G}$ quasi-split.  Then, $\mathcal{G}(k^\circ)$ is a maximal bounded subgroup of $G(k)$.
\end{theorem}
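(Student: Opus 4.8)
**

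The plan is to deduce Theorem~\ref{Thm:HyperspecialSubgroupQuasiSplit} from Theorem~\ref{Thm:StabiliserMetricRational} by following the three-step structure of the $\GL_n$ argument recalled in the introduction, with the metric $\|\cdot\|_{\mathcal{L}}$ playing the role of the sup-norm on $k^n$. The first task is a reduction to the semi-simple case: the statement for a general quasi-split reductive $\mathcal{G}$ should follow from the semi-simple one by passing to the derived group (or by quotienting out the radical torus), using that the central torus contributes a factor whose integral points are manifestly maximal bounded and that boundedness is compatible with the relevant isogenies and exact sequences. I would isolate this as the first step so that afterwards I may freely assume $G$ semi-simple and quasi-split, which is exactly the hypothesis under which Theorem~\ref{Thm:StabiliserMetricRational} is available.

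Granting semi-simplicity, the heart of the argument is step~(2): showing that any bounded subgroup $H \supseteq \mathcal{G}(k^\circ)$ actually preserves the metric $\|\cdot\|_{\mathcal{L}}$, so that $H \subseteq \Stab_{G(k)}(\|\cdot\|_{\mathcal{L}}) = \mathcal{G}(k^\circ)$ by Theorem~\ref{Thm:StabiliserMetricRational}. The idea is to mimic the construction $\|x\|_H := \sup_{h\in H}\|h(x)\|$: given $H$ bounded, the orbit of the metric under $H$ should be uniformly bounded (above and below) thanks to the fact, recorded in paragraph~\ref{par:Definitions}, that $\|\cdot\|_{\mathcal{L}}$ is bounded on bounded subsets, and so one can form an $H$-invariant metric by taking a supremum over the $H$-action. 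Here the $\GL_n$ proof's step~(2) was declared ``trivial'', and the analogous simplification in our setting is that $\mathcal{G}(k^\circ)$ already stabilises $\|\cdot\|_{\mathcal{L}}$ by its very construction (the model $\mathcal{G}$ being $\mathcal{G}(k^\circ)$-equivariant); one then argues that an $H$-invariant metric lying within bounded ratio of $\|\cdot\|_{\mathcal{L}}$ forces $H$ itself into the stabiliser, comparing the two metrics on the $G(k)$-orbit.

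The role of step~(3) is supplied by Proposition~\ref{Prop:TransitiveActionOnBorelVariety}, asserting that $\mathcal{G}(k^\circ)$ acts transitively on $X(k) = \Bor(G)(k)$; this is what collapses the comparison function to a constant and is cited as a standard fact in the theory of reductive $k^\circ$-groups. I would invoke it to conclude that the $H$-invariant data cannot differ from $\|\cdot\|_{\mathcal{L}}$, closing the inclusion $H \subseteq \mathcal{G}(k^\circ)$. The main obstacle I anticipate is step~(2): verifying that boundedness of $H$ in the sense of Definition~\ref{par:Definitions} genuinely yields a finite, $H$-invariant sup-metric and that the resulting metric stays comparable to $\|\cdot\|_{\mathcal{L}}$ uniformly over $X(k)$, since unlike the transparent $\GL_n$ case the bound must be extracted abstractly from a closed embedding $G \subseteq \mathbb{A}^n_k$ rather than from an explicit linear representation carrying the metric. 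Establishing this boundedness-to-invariance mechanism cleanly, and checking that the reduction to the semi-simple case does not disturb it, is where the real work lies; once that is in place the appeal to Theorem~\ref{Thm:StabiliserMetricRational} and Proposition~\ref{Prop:TransitiveActionOnBorelVariety} finishes the proof.
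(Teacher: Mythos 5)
Your proposal follows essentially the same route as the paper: reduce to the semi-simple case via the derived group and the central torus, form the $H$-invariant metric $\|s\|_H(x) = \sup_{h\in H}\|h\cdot s\|_{\mathcal{L}}(h\cdot x)$ (finite because $H$ is bounded and $\|\cdot\|_{\mathcal{L}}$ is bounded on bounded sets), observe that the ratio $\|\cdot\|_H/\|\cdot\|_{\mathcal{L}}$ is $\mathcal{G}(k^\circ)$-invariant and hence constant by transitivity of $\mathcal{G}(k^\circ)$ on $X(k)$, and conclude $H\subset\Stab_{G(k)}(\|\cdot\|_{\mathcal{L}})=\mathcal{G}(k^\circ)$ by Theorem~\ref{Thm:StabiliserMetricRational}. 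The step you flag as the anticipated obstacle is handled in the paper exactly as you suggest, in one line, from the recorded fact that $\|\cdot\|_{\mathcal{L}}$ is continuous and bounded on bounded subsets of $\V(L,k)$.
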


\begin{proof}[{Proof of Theorem \ref{Thm:HyperspecialSubgroupQuasiSplit}}] We may assume that $G$ is semi-simple. Indeed, if it is not the case, we consider the derived group $\mathcal{D}$ of $\mathcal{G}$ (which is a semi-simple $k^\circ$-group scheme \cite[XXII, Th\'eor\`eme 6.2.1 (iv)]{SGA3}) and the identity component of the center $\mathcal{Z}$ of $\mathcal{G}$ (which is a $k^\circ$-torus). The map $\pi \colon \mathcal{D} \times_{k^\circ} \mathcal{Z} \to \mathcal{G}$ given by multiplication  is an isogeny \cite[XXII, Proposition 6.2.4]{SGA3}. If $H$ is a bounded subgroup of $G(k)$ containing $\mathcal{G}(k^\circ)$, then the subgroup $\pi^{-1}(H)$ contains $\mathcal{D}(k^\circ) \times \mathcal{Z}(k^\circ)$ and is bounded because $\pi$ is a finite morphism\footnote{If $V, W$ are $k$-schemes of finite type and $f \colon V \to W$ is a finite morphism, then the inverse image of a bounded subset of $W(k)$ is bounded. Since finite morphisms are projective, in order to prove this statement, one is immediately led back to prove it when $V = \P^n \times_k W$ and $f$ is the projection on the second factor. This latter statement is clear because $\P^n(k)$ is bounded (the proof given in \cite[1.1, Proposition 6]{NeronModels} when $k$ is discretely valued generalises without problems to the non-discretely valued case).}. Since $\mathcal{Z}(k^\circ)$ is the maximal bounded subgroup of $Z(k)$ (Proposition \ref{Prop:MaxCompactSubgroupTorus}), we are left with proving that $\mathcal{D}(k^\circ)$ is a maximal bounded subgroup of $D(k)$. 

Let us henceforth suppose that $G$ is semi-simple. Let $H$ be a bounded subgroup containing $\mathcal{G}(k^\circ)$ and let us consider the metric $\| \cdot \|_H$ on $L_{ \rvert X(k)}$ defined, for every point $x \in X(k)$ and every section $s \in x^\ast L$, by
$$ \| s\|_H(x) \df \sup_{h \in H} \| h \cdot s\|_{\mathcal{L}} (h \cdot x).$$
Note that $\| \cdot \|_H$ takes real values because $H$ is bounded and $\| \cdot \|_{\mathcal{L}}$ is continuous and bounded. The ratio of the metrics $\| \cdot \|_H$ and $\| \cdot \|_{\mathcal{L}}$ defines a function
$$ \phi = \frac{\| \cdot \|_H}{\| \cdot \|_{\mathcal{L}}} \colon X(k) \too \R_+,$$
which is invariant under the action of $\mathcal{G}(k^\circ)$. Since $\mathcal{G}(k^\circ)$ acts transitively on $X(k)$ (Proposition \ref{Prop:TransitiveActionOnBorelVariety} (\ref{Prop:TransitiveActionOfTheStabiliser})), the function $\phi$ must be constant. Thus $H$ is contained in  $\Stab_{G(k)}(\| \cdot \|_{\mathcal{L}})$  and, according to Theorem \ref{Thm:StabiliserMetricRational}, we conclude. 
\end{proof}

\section{Some preliminary facts} \label{sec:PreliminaryFacts}

In this section we collect some facts that will be used during the proof of Theorem \ref{Thm:StabiliserMetricRational}. Some of them are standard facts but we included their proof for the sake of completeness.

\subsection{On the scheme of Borel subgroups} A perfect field $F$ is said to be of cohomological dimension $\le 1$ if every homogeneous space under a connected linear algebraic group has a $F$-rational point \cite[\S 2.2, III, Th\'eor\`eme 1 and \S 2.3, Corollaire 1]{CohomologieGaloisienne}. The only examples of fields of cohomological dimension $\le 1$ we are interested in are finite fields (``Lang's theorem'' \cite[Corollary 16.5 (i)]{Borel}).

\begin{proposition} \label{Prop:GroupIsQuasiSplit} Let us suppose that the residue field $\tilde{k}$ is perfect of cohomological dimension $\le 1$ and let $\mathcal{G}$ be a $k^\circ$-reductive group. Then, its generic fibre $G$ is quasi-split.
\end{proposition}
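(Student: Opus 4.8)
The plan is to show that a reductive group over $k^\circ$ acquires a Borel subgroup over $k^\circ$ itself once the residue field has cohomological dimension $\le 1$, and then to specialise this Borel to the generic fibre. The key structural input is that the scheme of Borel subgroups $\mathcal{X} = \Bor(\mathcal{G})$ is smooth and projective over $k^\circ$ (recalled in the reminders), so producing a $k^\circ$-point of $\mathcal{X}$ is exactly the same as exhibiting a Borel subgroup defined over $k^\circ$. By the valuative criterion applied to the proper morphism $\mathcal{X} \to \Spec k^\circ$, such a $k^\circ$-point is equivalent to a $k$-point of $X$ that extends integrally; but smoothness of $\mathcal{X}$ will let us go the other way and lift a point of the special fibre. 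So the first step I would take is to reduce everything to finding a $\tilde{k}$-rational point of the special fibre $\tilde{X} = \Bor(\tilde{G})$.

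The second and central step is to produce that $\tilde{k}$-point. Here the hypothesis on $\tilde{k}$ enters decisively: the special fibre $\tilde{G}$ is a connected reductive group over the perfect field $\tilde{k}$, and its variety of Borel subgroups $\tilde{X}$ is a projective homogeneous space under $\tilde{G}$ (the group acts transitively on its Borel subgroups over an algebraic closure, and over $\tilde{k}$ this is a homogeneous space in the scheme-theoretic sense). Since $\tilde{k}$ has cohomological dimension $\le 1$, the very definition recalled just before the proposition guarantees that every homogeneous space under a connected linear algebraic group has a $\tilde{k}$-rational point. Applying this to $\tilde{X}$ gives a Borel subgroup of $\tilde{G}$ defined over $\tilde{k}$, which is precisely a $\tilde{k}$-point of $\tilde{X}$.

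The third step is to lift this $\tilde{k}$-point of $\tilde{X}$ to a $k^\circ$-point of $\mathcal{X}$. Because $\mathcal{X}$ is smooth over $k^\circ$, I would invoke the version of Hensel's Lemma cited in the notations (that is, \cite[Corollaire 1.11]{SGA3ExpXI}): a smooth morphism to $\Spec k^\circ$ admits a section through any prescribed point of the special fibre, since $k^\circ$ is henselian. This yields a section $\Spec k^\circ \to \mathcal{X}$, equivalently a Borel subgroup $\mathcal{B} \subset \mathcal{G}$ defined over $k^\circ$. Restricting $\mathcal{B}$ to the generic fibre produces a Borel subgroup of $G$ defined over $k$, which by definition means that $G$ is quasi-split.

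The main obstacle I anticipate is not any of the lifting steps, which are formal consequences of smoothness and henselianity, but rather the careful identification that the special fibre $\tilde{X}$ really is a homogeneous space under $\tilde{G}$ to which the cohomological-dimension hypothesis applies: one must know that $\tilde{G}$ is connected reductive (which holds by the fibrewise definition of reductivity for $\mathcal{G}$) and that $\Bor(\tilde{G}) = \tilde{X}$ is the base change of $\mathcal{X}$, so that a $\tilde{k}$-Borel of $\tilde{G}$ is genuinely a point of the special fibre of $\mathcal{X}$. Once these compatibilities between the functor of Borel subgroups and base change are in hand (they follow from the construction of $\mathcal{X}$ via \cite[XXII, Corollaire 5.8.3]{SGA3}), the argument closes. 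A final remark: one should confirm that passing from a $k^\circ$-Borel to its generic fibre does not degenerate it, but this is automatic since $\mathcal{B} \to \Spec k^\circ$ is itself smooth with Borel fibres.
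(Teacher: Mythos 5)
Your proposal is correct and follows exactly the paper's own argument: use the cohomological-dimension hypothesis to produce a $\tilde{k}$-point of the homogeneous space $\tilde{X} = \Bor(\tilde{G})$, lift it to a $k^\circ$-point of $\mathcal{X}$ by smoothness and Hensel's Lemma, and take the generic fibre of the resulting Borel subgroup $\mathcal{B}$. The additional compatibility checks you flag (base change of the Borel functor, connectedness and reductivity of $\tilde{G}$) are implicit in the paper's setup and pose no difficulty.
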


\begin{proof} The special fibre $\tilde{X}$ of $\mathcal{X}$ is a homogeneous space under the action of the connected group $\tilde{G}$. Therefore, by definition of field of cohomological dimension $\le 1$, it admits a $\tilde{k}$-rational point. Thanks to the smoothness of $\mathcal{X}$ and Hensel's lemma, such a rational point can be lifted to a $k^\circ$-valued point of $\mathcal{X}$, that is, to a Borel subgroup $\mathcal{B}$ of $\mathcal{G}$. The generic fibre of $\mathcal{B}$ does the job.
\end{proof} 

\begin{proposition} \label{Prop:TransitiveActionOnBorelVariety} Let us suppose $\tilde{k}$ arbitrary and $G$ quasi-split. Then,
\begin{enumerate}
\item \label{Prop:MaxTorusInABorel} every Borel subgroup $\mathcal{B}$ of $\mathcal{G}$ contains a maximal torus of $\mathcal{G}$;
\item \label{Prop:TransitiveActionOfTheStabiliser} $\mathcal{G}(k^\circ)$ acts transitively on $X(k)$;
\item \label{Prop:IwasawaDecomposition} (Iwasawa decomposition) for every Borel subgroup $B$ of $G$, we have $$G(k) = \mathcal{G}(k^\circ) \cdot B(k). $$
\end{enumerate}
\end{proposition}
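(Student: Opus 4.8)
The plan is to derive all three assertions from one mechanism: for the object we are after, exhibit a smooth $k^\circ$-scheme whose $k^\circ$-points parametrise it, produce a $\tilde{k}$-rational point on its special fibre by a theorem valid over an arbitrary field, and lift this point by Hensel's Lemma. Two preliminary observations organise the argument. Since $\mathcal{X}$ is proper over $k^\circ$, the valuative criterion (paragraph \ref{par:Definitions}) identifies $X(k)$ with $\mathcal{X}(k^\circ)$; thus a point $x \in X(k)$ is a Borel subgroup $\mathcal{B}_x$ of $\mathcal{G}$ over $k^\circ$, whose reduction is a Borel subgroup $\tilde{\mathcal{B}}_x$ of $\tilde{G}$ over $\tilde{k}$. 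Moreover the existence of a $k^\circ$-Borel forces $\tilde{G}$ to be quasi-split, so over the residue field we may freely use the conjugacy theorems available in that setting.

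For assertion (\ref{Prop:MaxTorusInABorel}), fix a Borel subgroup $\mathcal{B}$ and let $\mathcal{M}$ be the $k^\circ$-scheme of maximal tori of $\mathcal{B}$. The group $\rad^u(\mathcal{B})$ acts on $\mathcal{M}$ by conjugation, simply transitively on geometric fibres, so $\mathcal{M}$ is a torsor under the smooth group $\rad^u(\mathcal{B})$; in particular it is smooth over $k^\circ$ (see also \cite[XII]{SGA3}). On the special fibre, the smooth connected affine $\tilde{k}$-group $\tilde{\mathcal{B}}$ contains a maximal torus defined over $\tilde{k}$ by Grothendieck's theorem \cite[XIV, Th\'eor\`eme 1.1]{SGA3}, and such a torus is maximal in $\tilde{G}$; this gives a point of $\mathcal{M}(\tilde{k})$. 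By smoothness and Hensel's Lemma it lifts to $\mathcal{M}(k^\circ)$, i.e.\ to a maximal torus of $\mathcal{B}$.

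For assertion (\ref{Prop:TransitiveActionOfTheStabiliser}), take $x, x' \in X(k) = \mathcal{X}(k^\circ)$ with associated $k^\circ$-Borels $\mathcal{B}, \mathcal{B}'$. As a Borel is its own normaliser, $\mathcal{X} \iso \mathcal{G}/\mathcal{B}$, so the orbit morphism $\rho \colon \mathcal{G} \to \mathcal{X}$, $g \mapsto g \cdot x$, is smooth and surjective. Hence the fibre $\mathcal{F} \df \mathcal{G} \times_{\mathcal{X}, x'} \Spec k^\circ$ is smooth over $k^\circ$, and $\mathcal{F}(k^\circ)$ is precisely the set of $g \in \mathcal{G}(k^\circ)$ with $g \mathcal{B} g^{-1} = \mathcal{B}'$. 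The special fibre $\mathcal{F} \times_{k^\circ} \tilde{k}$ is the transporter of $\tilde{\mathcal{B}}$ to $\tilde{\mathcal{B}}'$ in $\tilde{G}$; since $\tilde{G}$ is quasi-split these are minimal parabolic $\tilde{k}$-subgroups, hence $\tilde{G}(\tilde{k})$-conjugate by the Borel--Tits theorem \cite[Theorem 20.9]{Borel}. This produces a point of $\mathcal{F}(\tilde{k})$, which Hensel's Lemma lifts to the sought $g \in \mathcal{F}(k^\circ)$.

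Assertion (\ref{Prop:IwasawaDecomposition}) is then formal: given $g \in G(k)$ and the point $[B] \in X(k)$ attached to $B$, part (\ref{Prop:TransitiveActionOfTheStabiliser}) furnishes $h \in \mathcal{G}(k^\circ)$ with $h \cdot [B] = g \cdot [B]$, so $h^{-1}g$ fixes $[B]$ and thus lies in $N_G(B)(k) = B(k)$, giving $g = h \cdot (h^{-1} g) \in \mathcal{G}(k^\circ) \cdot B(k)$. The whole weight of the proof rests on the special fibre: the two non-formal inputs are Grothendieck's existence of a maximal torus and the Borel--Tits conjugacy of minimal parabolics, both over the arbitrary residue field $\tilde{k}$, while the only point demanding care is the smoothness of $\mathcal{M}$ and of $\rho$, which is what makes Hensel's Lemma applicable.
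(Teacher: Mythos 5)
Your proof is correct, but for parts (\ref{Prop:MaxTorusInABorel}) and (\ref{Prop:TransitiveActionOfTheStabiliser}) it takes a genuinely different route from the paper. The paper disposes of (\ref{Prop:MaxTorusInABorel}) by a direct citation of SGA3's relative statement (Borel subgroups over a semi-local base contain maximal tori), and proves (\ref{Prop:TransitiveActionOfTheStabiliser}) by invoking the conjugacy theorem for parabolics over a semi-local base \cite[XXVI, Corollaire 5.2]{SGA3}, whose hypothesis (existence of an opposite Borel) is exactly what part (\ref{Prop:MaxTorusInABorel}) supplies via \cite[XXII, Proposition 5.9.2]{SGA3}; part (\ref{Prop:IwasawaDecomposition}) is then the same formal consequence of $\mathcal{X}(k^\circ)=X(k)$ in both texts. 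You instead re-prove the two relative statements in the henselian-local setting by the ``produce a $\tilde{k}$-point on a smooth $k^\circ$-scheme and lift by Hensel'' mechanism, taking as inputs Grothendieck's theorem on the existence of maximal tori and the Borel--Tits conjugacy of minimal parabolic $\tilde{k}$-subgroups \cite[Theorem 20.9]{Borel}. This is consistent with the philosophy the paper uses elsewhere (compare Proposition \ref{Prop:GroupIsQuasiSplit}) and makes the reduction to the residue field explicit; it also decouples (\ref{Prop:TransitiveActionOfTheStabiliser}) from (\ref{Prop:MaxTorusInABorel}), which the paper's argument cannot do. What it costs is the extra care you rightly flag: one must know that the scheme of maximal tori of $\mathcal{B}$ and the transporter $\rho^{-1}(x')$ are smooth (torsors under $\rad^u(\mathcal{B})$ and $\mathcal{B}$ respectively), and one must use genuinely field-theoretic theorems over an arbitrary $\tilde{k}$, whereas the paper's citations to \cite[XXVI]{SGA3} work over any semi-local base with no appeal to henselianness.
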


\begin{proof} (1) \cite[XII, Corollaire 5.9.7]{SGA3}. (2) We can apply \cite[XXVI, Corollaire 5.2]{SGA3}. Indeed, if $\mathcal{B}$ is a Borel subgroup of $\mathcal{G}$, by (1) it contains a maximal torus $\mathcal{T}$  and we can consider the Borel subgroup $\mathcal{B}'$ opposite to $\mathcal{B}$ with respect to $\mathcal{T}$ \cite[XXII, Proposition 5.9.2]{SGA3}. 

(3) Since $\mathcal{X}$ is proper, the valuative criterion of properness entails the equality $\mathcal{X}(k^\circ) = X(k)$, which, according to (2), gives
$$ \mathcal{G}(k^\circ) / \mathcal{B}(k^\circ) = G(k) / B(k).$$
The result follows immediately.
\end{proof}

\subsection{Size of global sections of the anti-canonical bundle} Let us start by recalling a basic fact in the theory of Schubert varieties over a field.

\begin{lemma} \label{Lemma:NonVanishingLocusOfTheEigenvector} Let $F$ be a field. Let $H$ be a quasi-split reductive $F$-group and $P$ a Borel subgroup of $H$. Let $Y$ be the variety of Borel subgroups of $H$ and $M$ the anti-canonical bundle of $Y$. Then,
\begin{enumerate}
\item there exists a unique (up to scalar factor) non-zero eigenvector in $H^0(Y, M)$ for $P$;
\item the locus where such an eigenvector does not vanish is the open subset $\Opp(P) \subset Y$ made of Borel subgroups opposite to $P$.
\end{enumerate}
\end{lemma}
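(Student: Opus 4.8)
The plan is to reduce both assertions to the classical geometry of the big cell of a flag variety. Write $U \df \rad^u(P)$ and recall that, fixing a Borel $P'$ opposite to $P$, the orbit map $u \mapsto u \cdot P'$ identifies $U$ with the open subset $\Opp(P) \subset Y$; thus $\Opp(P)$ is the open $P$-orbit and is isomorphic to an affine space. Its complement $Y \setminus \Opp(P)$ is a reduced divisor, equal to the union $\bigcup_i D_i$ of the codimension-one Schubert varieties (the closures of the codimension-one $P$-orbits, one for each simple root), and over $\bar F$ the classes $[D_i]$ are linearly independent in $\operatorname{Pic}(Y_{\bar F}) \otimes \Q$, corresponding to the fundamental weights.

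First I would establish existence together with the non-vanishing locus. A translation-invariant top-degree form on the affine space $\Opp(P)$ gives a nowhere-vanishing rational section $\eta$ of the canonical bundle $\textup{K}_Y = M^\vee$ on the big cell; its inverse $\eta^{-1}$ is then a rational section of $M$ that is regular and non-vanishing on $\Opp(P)$ and whose divisor is supported on the boundary $\bigcup_i D_i$. A local computation of the order of $\eta$ along each $D_i$, reducing to the rank-one case and governed by the character $2\rho = \sum_{\alpha > 0} \alpha$ attached to $M$, shows that $\eta^{-1}$ extends to a global section $s_0 \in H^0(Y, M)$ vanishing exactly along $\bigcup_i D_i$. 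Since this boundary is $P$-stable (being the complement of a $P$-orbit), the line $F \cdot s_0$ is $P$-stable, so $s_0$ is a $P$-eigenvector whose non-vanishing locus is precisely $\Opp(P)$.

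For uniqueness I would argue at the level of divisors. Any non-zero $P$-eigenvector $s \in H^0(Y, M)$ has a $P$-invariant effective divisor $\operatorname{div}(s)$; as the only $P$-orbits of codimension one lie in the boundary, $\operatorname{div}(s) = \sum_i n_i D_i$ with $n_i \ge 0$. Because every such divisor represents the class of $M$ and the $[D_i]$ are linearly independent in $\operatorname{Pic}(Y_{\bar F}) \otimes \Q$, the multiplicities $n_i$ are uniquely determined; hence all $P$-eigenvectors share the same divisor, and two sections of $M$ with equal divisor differ by a scalar, since $Y$ is projective and geometrically integral so that $H^0(Y, \O_Y) = F$. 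This yields (1), and together with the previous paragraph it shows that the common support of these divisors is $\bigcup_i D_i$, giving (2).

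The main obstacle is the descent to the possibly non-closed field $F$. Over $\bar F$ the ingredients above are the standard facts of Schubert calculus and Borel--Weil theory, and the point is that, $H$ being quasi-split, the Borel $P$ is defined over $F$, so the open subscheme $\Opp(P)$, its boundary $\bigcup_i D_i$, and the line bundle $M$ are all defined over $F$, even though the individual Schubert divisors $D_i$ may be merely permuted (through the $\ast$-action of the Galois group) rather than fixed. Since the character $2\rho$ is invariant under this permutation, the one-dimensional $P$-eigenspace of $H^0(Y_{\bar F}, M_{\bar F})$ is Galois-stable and therefore descends to an $F$-line; this furnishes the eigenvector over $F$ and makes the divisor-theoretic uniqueness argument valid over $F$.
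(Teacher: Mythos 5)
Your proposal is correct, and it is essentially the paper's own argument with the black boxes opened: the paper proves the split case by citing the standard facts about the big cell $\Opp(P)$, the Schubert divisors and the Picard group of the flag variety (Brion, Springer, Kempf), and then invokes Galois descent for the quasi-split case, which is precisely the structure of your proof. The only difference is that you supply the details (the anti-canonical section via a translation-invariant form on the big cell, uniqueness via linear independence of the boundary divisor classes) that the paper delegates to the references.
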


\begin{proof} When $H$ is split, $Y$ is the Schubert variety associated to the maximal element $w_0$ of the Weyl group of $H$ (with respect to the Bruhat order) and $\Opp(P)$ is the corresponding Bruhat cell -- see, for instance, \cite[Proposition 1.4.5]{Brion}, \cite[\S 8.5.7]{Springer} or \cite[\S 4]{Kempf} for a thorough discussion of these aspects. The quasi-split case follows by Galois descent.
\end{proof}

Let us go back to the general notation introduced in paragraph \ref{par:Notation}. 

\begin{proposition} \label{Prop:MetricOfTheEigenvector} Let us suppose $G$ quasi-split and let $B$ be a Borel subgroup. Let $s \in H^0(\mathcal{X}, \mathcal{L})$ be an eigenvector for $B$ such that its reduction $\tilde{s}$ is non-zero. Then,
$$ \{ x \in X(k) : \| s\|_{\mathcal{L}} (x) = 1\} = \Opp(\mathcal{B}, k^\circ),$$
where $\mathcal{B}$ is the Borel subgroup of $\mathcal{G}$ lifting $B$ and $\Opp(\mathcal{B})$ is the open subset of $\Bor(\mathcal{G})$ made of Borel subgroups opposite to $\mathcal{B}$.
\end{proposition}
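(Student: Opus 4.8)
The plan is to translate the question about the metric $\|\cdot\|_{\mathcal{L}}$ into a question about the integral model $\mathcal{X}$ via the identity $\mathcal{X}(k^\circ) = X(k)$. Let $x \in X(k)$, which by the valuative criterion of properness extends uniquely to $\epsilon_x \colon \Spec k^\circ \to \mathcal{X}$. By the very definition of $\|\cdot\|_{\mathcal{L}}$, the value $\|s\|_{\mathcal{L}}(x)$ is computed by restricting the global section $s \in H^0(\mathcal{X}, \mathcal{L})$ along $\epsilon_x$: the pullback $\epsilon_x^\ast s$ is an element of the free rank-one $k^\circ$-module $\epsilon_x^\ast \mathcal{L}$, and if $s_0$ is a generator then $\|s\|_{\mathcal{L}}(x) = |\lambda|$ where $\epsilon_x^\ast s = \lambda s_0$. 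The key observation is therefore that $\|s\|_{\mathcal{L}}(x) = 1$ (equivalently $|\lambda| = 1$, i.e.\ $\lambda \in (k^\circ)^\times$) if and only if $\epsilon_x^\ast s$ is itself a generator of $\epsilon_x^\ast \mathcal{L}$, which by Nakayama's lemma happens if and only if the reduction of $\epsilon_x^\ast s$ to the residue field $\tilde{k}$ is non-zero. Since $\|s\|_{\mathcal{L}}(x) \le 1$ always holds for $x \in \mathcal{X}(k^\circ)$ when $s$ is integral (the pullback of an integral section lands in the lattice), the condition $\|s\|_{\mathcal{L}}(x) = 1$ is exactly the non-vanishing of the reduction.

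The next step is to identify this non-vanishing locus on the special fibre. Reduction gives a point $\tilde{x} \in \tilde{X}(\tilde{k})$, and the reduction of $\epsilon_x^\ast s$ is non-zero precisely when $\tilde{s}$ does not vanish at $\tilde{x}$, where $\tilde{s} \in H^0(\tilde{X}, \tilde{L})$ is the reduction of $s$. By hypothesis $\tilde{s}$ is a non-zero eigenvector for the Borel subgroup $\tilde{B}$ of $\tilde{G}$ (the reduction of $B$, or rather of $\mathcal{B}$). I would then invoke Lemma \ref{Lemma:NonVanishingLocusOfTheEigenvector} applied to $H = \tilde{G}$ over the residue field $F = \tilde{k}$: the non-vanishing locus of such an eigenvector is exactly the open subset $\Opp(\tilde{\mathcal{B}})$ of Borel subgroups of $\tilde{G}$ opposite to $\tilde{\mathcal{B}}$. (One must check $\tilde{G}$ is quasi-split so the Lemma applies; this follows since $\mathcal{G}$ is quasi-split over $k^\circ$, hence so is every fibre.) Thus $\|s\|_{\mathcal{L}}(x) = 1$ if and only if $\tilde{x} \in \Opp(\tilde{\mathcal{B}})(\tilde{k})$.

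Finally I would convert the condition on the special fibre back into the integral condition $x \in \Opp(\mathcal{B}, k^\circ)$. Since $\Opp(\mathcal{B})$ is an open subscheme of $\mathcal{X}$ whose special fibre is $\Opp(\tilde{\mathcal{B}})$, a $k^\circ$-point $\epsilon_x$ factors through $\Opp(\mathcal{B})$ if and only if its closed point $\tilde{x}$ lies in $\Opp(\tilde{\mathcal{B}})$ — this is just the statement that $\Opp(\mathcal{B})$ is open and $\Spec k^\circ$ is local, so the generic point of its image lies in $\Opp(\mathcal{B})$ as soon as the special point does. This yields the desired equality of sets $\{x \in X(k) : \|s\|_{\mathcal{L}}(x) = 1\} = \Opp(\mathcal{B}, k^\circ)$.

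I expect the main obstacle to be the careful bookkeeping of the reductions: namely verifying cleanly that the reduction of $\epsilon_x^\ast s$ agrees with evaluating $\tilde{s}$ at $\tilde{x}$ (compatibility of pullback with reduction, which requires that $s$ is an \emph{integral} section so that the formation of $\epsilon_x^\ast s$ commutes with base change to $\tilde{k}$), and ensuring that the integrality bound $\|s\|_{\mathcal{L}}(x) \le 1$ holds so that the equality case is governed purely by non-vanishing of the reduction rather than by some subtler comparison of lattices.
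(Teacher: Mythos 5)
Your proposal is correct and follows essentially the same route as the paper: reduce the condition $\|s\|_{\mathcal{L}}(x)=1$ to the non-vanishing of $\tilde{s}$ at $\tilde{x}$, identify that locus as $\Opp(\tilde{B})$ via Lemma \ref{Lemma:NonVanishingLocusOfTheEigenvector} over $\tilde{k}$, and translate back to $\Opp(\mathcal{B},k^\circ)$. The only (harmless) difference is that the paper also invokes the lemma over $k$ to describe the locus where the metric is non-zero, whereas you observe that the openness of $\Opp(\mathcal{B})$ in $\mathcal{X}$ makes the special-fibre condition alone sufficient.
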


\begin{remark}
This statement is a ``coordinate-free'' analogue of \cite[Proposition 2.18 (i)]{RemyThuillierWerner} (in the sense that we do not need to consider a maximal split $k$-torus of $G$ and the corresponding roots).
\end{remark}

\begin{proof} Let $x \in X(k)$. First of all, applying Lemma \ref{Lemma:NonVanishingLocusOfTheEigenvector} with $F = k$ and $H = G$, let us remark that we have $\| s\|_\mathcal{L}(x) \neq 0$ exactly when $x \in \Opp(B, k)$. Furthermore, the equality $\| s\|_\mathcal{L}(x) = 1$ is equivalent to say that the reduction $\tilde{s} \in H^0(\tilde{X}, \tilde{L})$ of $s$ does not vanish at the reduction $\tilde{x} \in \tilde{X}(\tilde{k})$ of $x$.

If $\tilde{B}$ denotes special fibre of $\mathcal{B}$, then $\tilde{s}$ is a non-zero eigenvector for $\tilde{B}$. Therefore, applying again Lemma \ref{Lemma:NonVanishingLocusOfTheEigenvector} to $F = \tilde{k}$ and $H = \tilde{G}$, we obtain that $\tilde{s}$ does not vanishes precisely on the open subset $\Opp(\tilde{B})$ of $\tilde{X}$ made of Borel subgroups of $\tilde{G}$ opposite to $\tilde{B}$. 
Let $\mathcal{B}_x$ the Borel subgroup of $\mathcal{G}$ associated to $x$. Summing up we have:
\begin{eqnarray*}
\| s\|_{\mathcal{L}}(x) \neq 0  & \Longleftrightarrow & \textup{the generic fibre of $\mathcal{B}_x$ is opposite to $B$}, \\
\| s\|_{\mathcal{L}}(x) = 1  & \Longleftrightarrow  & \hspace{-5pt}
\begin{tabular}{l}
\textup{the generic and the special fibre of $\mathcal{B}_x$ are}\\ 
\textup{respectively opposite to $B$ and $\tilde{B}$.}
\end{tabular}
\end{eqnarray*}
In other words, $\| s\|_{\mathcal{L}}(x) = 1$ if and only if $x \in \Opp(\mathcal{B}, k^\circ)$.
\end{proof}

\subsection{Compact subgroups of tori} Given a torus $T$ over a non-archimedean field $k$ (complete and non-trivially valued), the set of its $k$-rational points contains a unique maximal bounded subgroup $U_T$.\footnote{Indeed, if $T \iso \G_{m, k}^r$ is split one takes $U_T = \Gm^r(k^\circ)$. If $T$ is not split, let $k'$ be a finite separable extension splitting $T$ and let $T' = T \times_k k'$. Then, $U_T = U_{T'} \cap T(k)$. }

It is not true in general that $U_T$ is the group of $k^\circ$-valued points of a $k^\circ$-torus $\mathcal{T}$. When $k$ is discretely valued, $U_T$ coincides with the set of $k^\circ$-valued points of the identity component $\mathcal{T}$ of the N\'eron model of $T$ \cite[4.4.12]{BruhatTitsII} but, if the splitting extension of $K$ is ramified, then the special fibre of $\mathcal{T}$ may not be a torus \cite[4.4.13]{BruhatTitsII}. Anyway, this is true if $T$ is already the generic fibre of $k^\circ$-torus : 

\begin{proposition} \label{Prop:MaxCompactSubgroupTorus} Let $\mathcal{T}$ be a $k^\circ$-torus and $T$ its generic fibre. Then,  $\mathcal{T}(k^\circ)$ is the unique maximal bounded subgroup of $T(k)$.
\end{proposition}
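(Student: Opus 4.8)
The plan is to identify the unique maximal bounded subgroup $U_T$ of $T(k)$, whose existence and uniqueness were recalled just before the statement, with the group $\mathcal{T}(k^\circ)$. One inclusion is immediate: since $\mathcal{T}$ is affine and of finite type over $k^\circ$, any closed embedding $\mathcal{T} \hookrightarrow \A^n_{k^\circ}$ gives $\mathcal{T}(k^\circ) \subseteq \A^n(k^\circ)$, so $\mathcal{T}(k^\circ)$ is a bounded subgroup and therefore $\mathcal{T}(k^\circ) \subseteq U_T$. The substance of the proof is the reverse inclusion $U_T \subseteq \mathcal{T}(k^\circ)$.

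First I would reduce to the split case. A $k^\circ$-torus is split by a finite étale surjective base change \cite{SGA3}; since $k^\circ$ is henselian, such a cover is of the form $\Spec (k')^\circ \to \Spec k^\circ$ for an unramified extension $k'/k$ with ring of integers $(k')^\circ$, and $\mathcal{T}_{(k')^\circ} \iso \Gm^r_{(k')^\circ}$. For a split torus over $(k')^\circ$ the assertion is elementary: embedding $\Gm \hookrightarrow \A^2$ via $t \mapsto (t, t^{-1})$ shows that a bounded subgroup of $(k')^\times$ can contain only elements of absolute value $1$ (otherwise powers would leave every bounded set), whence $U_{T'} = ((k')^{\circ \times})^r = \mathcal{T}((k')^\circ)$ for $T' = T_{k'}$. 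Next I would transport boundedness along the extension: any embedding $T \hookrightarrow \A^n_k$ base-changes to $T' \hookrightarrow \A^n_{k'}$, and $\A^n(k^\circ) \subseteq \A^n((k')^\circ)$, so the bounded subgroup $U_T \subseteq T(k) \subseteq T'(k')$ is still bounded in $T'(k')$; by maximality this forces $U_T \subseteq U_{T'} = \mathcal{T}((k')^\circ)$.

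At this point the proof comes down to the integrality identity
$$ \mathcal{T}((k')^\circ) \cap T(k) = \mathcal{T}(k^\circ). $$
To establish it I would write $\mathcal{T} = \Spec A$ with $A$ a $k^\circ$-algebra: a point $x \in T(k)$ is a homomorphism $A \to k$, and lying in $\mathcal{T}((k')^\circ)$ means precisely that $|x(a)| \le 1$ for every $a \in A$. As the valuation of $k'$ extends that of $k$ isometrically, this is equivalent to $x(a) \in k^\circ$ for all $a$, i.e. to $A \to k$ factoring through $k^\circ$, which by affineness of $\mathcal{T}$ says exactly $x \in \mathcal{T}(k^\circ)$. Combining the inclusions yields $U_T \subseteq \mathcal{T}((k')^\circ) \cap T(k) = \mathcal{T}(k^\circ)$ and hence the desired equality. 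The only genuinely delicate point will be arranging that the splitting cover is \emph{unramified}, so that the identification $U_{T'} = \mathcal{T}((k')^\circ)$ holds over $(k')^\circ$ rather than merely over $k'$; granting this (which follows from henselianity of $k^\circ$), the remaining steps are formal.
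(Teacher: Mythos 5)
Your proposal is correct and follows essentially the same route as the paper: establish the split case $\Gm^r(k^\circ)$ directly, pass to a finite unramified extension $k'$ splitting $\mathcal{T}$ (which exists by isotriviality of tori over the henselian ring $k^\circ$), and conclude via the identity $\mathcal{T}((k')^\circ) \cap T(k) = \mathcal{T}(k^\circ)$. You merely spell out in more detail the steps (transport of boundedness along the extension, the integrality computation) that the paper's terser proof leaves implicit.
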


\begin{proof} If $\mathcal{T} \iso \G_{m, k^\circ}^r$ is split, then $\Gm^r(k^\circ)$ is the unique maximal bounded subgroup of $\Gm^r(k)$. In general there exists a finite unramified extension $K$ of $k$ such that $\mathcal{T}_K \df \mathcal{T} \times_{k^\circ} K^\circ$ is split and, by the split case, $\mathcal{T}(K^\circ)$ is the unique maximal bounded subgroup of $T(K)$. It follows that $\mathcal{T}(K^\circ) \cap T(k) = \mathcal{T}(k^\circ)$ is the unique maximal bounded subgroup of $T(k)$.
\end{proof}

\subsection{Boundedness of the stabiliser} In this section we establish that the stabiliser $\Stab_{G(k)}(\| \cdot \|_{\mathcal{L}})$ is a bounded subset of $G(k)$. Let us begin with two results that we need in the proof.

\begin{lemma} \label{Prop:FinitenessRepresentation} Let $n \ge 1$ be such that $L^{\otimes n}$ is very ample. If $G$ is semi-simple, then the natural representation $ \rho \colon G \to \GL(H^0(X, L^{\otimes n}))$ is finite as a morphism of $k$-schemes.
\end{lemma}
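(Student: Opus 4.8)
The plan is to reduce the whole statement to a computation of the kernel of $\rho$, relying on the following general principle: a homomorphism $\varphi \colon G \to H$ between affine algebraic $k$-groups is a finite morphism as soon as its scheme-theoretic kernel $\Ker \varphi$ is finite over $k$. To see this I would factor $\varphi$ as the composite of the quotient map $G \to G/\Ker\varphi$, an isomorphism $G/\Ker\varphi \iso \varphi(G)$ onto the (closed) scheme-theoretic image, and the closed immersion $\varphi(G) \hookrightarrow H$. The last arrow is finite, being a closed immersion; the middle one is an isomorphism; and the first one is finite because it is an fppf torsor under the finite flat $k$-group scheme $\Ker\varphi$, and finite flatness descends along fppf coverings. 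Granting this, it remains only to bound $\Ker\rho$.

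To control the kernel I would exploit the very ampleness of $L^{\otimes n}$. Setting $W = H^0(X, L^{\otimes n})$, evaluation of global sections produces a $G$-equivariant closed immersion $\iota \colon X \hookrightarrow \P(W^\vee)$, where $G$ acts on $\P(W^\vee)$ through $\rho$. If $g$ is an $S$-point of $\Ker\rho$, then $\rho(g)$ acts as the identity on $W_S$, hence trivially on $\P(W^\vee)_S$, hence trivially on $X_S$ via $\iota$. Thus $\Ker\rho$ is contained in the scheme-theoretic kernel of the conjugation action of $G$ on $X = \Bor(G)$. Such a $g$ normalises every Borel subgroup, so it lies in $\bigcap_{\mathcal{B}} N_G(\mathcal{B}) = \bigcap_{\mathcal{B}} \mathcal{B}$, the self-normalising property of Borel subgroups being valid scheme-theoretically. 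This intersection is the centre $Z(G)$, which is finite precisely because $G$ is semi-simple. Hence $\Ker\rho \subseteq Z(G)$ is finite, and $\rho$ is finite by the first paragraph.

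The main obstacle will be carrying out the kernel computation with scheme-theoretic rigour rather than on geometric points only: the argument must be phrased on arbitrary $S$-points, which forces one to invoke the scheme-theoretic versions of ``Borels are self-normalising'' and ``the intersection of all Borel subgroups of a semi-simple group is its finite centre'' from \cite{SGA3}. Two auxiliary facts deserve precise citations, especially since in positive characteristic $\Ker\rho$ or $Z(G)$ may fail to be reduced: that an fppf torsor under a finite flat group scheme is a finite morphism, and that $G/\Ker\rho$ is genuinely isomorphic to the scheme-theoretic image $\rho(G)$. Both are standard, but handling the possibly non-reduced structure is the only delicate point; everything else is formal.
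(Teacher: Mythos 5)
Your proposal is correct and takes essentially the same route as the paper: both reduce finiteness of $\rho$ to finiteness of $\Ker\rho$, and both bound the kernel by observing that it acts trivially on $X\hookrightarrow\P(H^0(X,L^{\otimes n})^\vee)$, hence lies in the intersection of all Borel subgroups, which is finite for $G$ semi-simple (the paper phrases this via the radical being trivial, you via the centre; these amount to the same citation to \cite[\S 11.21]{Borel}). The extra care you take with the step ``finite kernel implies finite morphism'' and with non-reducedness is detail the paper leaves implicit, not a different argument.
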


\begin{proof} We may assume that $k$ is algebraically closed. We prove that $\Ker \rho$ is finite, which clearly implies the statement.

Since $X$ embeds $G$-equivariantly in $\P(H^0(X, L^{\otimes n})^\vee)$, then $\Ker \rho$ is contained in the stabiliser of every point of $X$. That is, $\ker \rho$ is contained in the intersection of all Borel subgroups. In other words, the identity component of $\Ker \rho$ is the radical of $G$, which is trivial since $G$ is semi-simple \cite[\S 11.21]{Borel}. \end{proof}

\begin{lemma} \label{Prop:BoundednessStabiliserNorm} Let $V$ be a finite dimensional $k$-vector space and let $\| \cdot \|$ be a norm on $V$. Then the following subgroup of $\GL(V, k)$,
$$ \Stab_{\GL(V, k)}(\| \cdot \|) \df \{ g \in \GL(V, k) : \| g \cdot v \| = \| v \| \textup{ for all } v \in V \},$$
is bounded.
\end{lemma}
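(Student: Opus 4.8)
The plan is to bound, uniformly over all isometries $g$, the absolute values of the matrix coefficients of $g$ and of its inverse. Recall that the notion of boundedness does not depend on the chosen closed embedding into affine space, so it suffices to produce a single embedding of $\GL(V)$ for which the relevant coordinates stay bounded on $\Stab_{\GL(V, k)}(\| \cdot \|)$; concretely, it is enough to bound the matrix entries of $g$ and of $g^{-1}$. To set this up I would first fix a basis $e_1, \dots, e_n$ of $V$ and introduce the associated maximum norm $\| \sum_i \lambda_i e_i \|_0 \df \max_i |\lambda_i|$, which plays the role of a fixed reference norm.

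The crucial input is the equivalence of norms on a finite-dimensional vector space over a complete, non-trivially valued non-archimedean field: there exist real constants $0 < c \le C$ such that $c \, \|v\|_0 \le \|v\| \le C \, \|v\|_0$ for all $v \in V$. This is where the completeness of $k$ genuinely enters, and it is the only non-formal ingredient. Granting it, for any $g \in \Stab_{\GL(V, k)}(\| \cdot \|)$ and any $v \in V$ I would estimate
$$ \|g \cdot v\|_0 \le \frac{1}{c} \, \|g \cdot v\| = \frac{1}{c} \, \|v\| \le \frac{C}{c} \, \|v\|_0, $$
using that $g$ preserves $\| \cdot \|$ in the middle equality. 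Thus the operator norm of $g$ relative to $\| \cdot \|_0$ is at most $C/c$, a bound that is independent of $g$.

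To finish I would observe that, for the maximum norm, the operator norm coincides with the maximum of the absolute values of the matrix coefficients: writing $g \cdot e_i = \sum_j a_{ji} e_j$ and testing the previous inequality on $v = e_i$ gives $\max_{i,j} |a_{ij}| \le C/c$. Since $\Stab_{\GL(V, k)}(\| \cdot \|)$ is a group, the identical bound applies to $g^{-1}$, so the matrix coefficients of both $g$ and $g^{-1}$ are uniformly bounded; the determinant and its inverse are then bounded as well, being polynomial expressions in these coefficients. Hence $\Stab_{\GL(V, k)}(\| \cdot \|)$ is a bounded subgroup of $\GL(V, k)$. I expect the only real obstacle to be the equivalence-of-norms statement over a complete non-archimedean field; once that is invoked, the rest is an elementary manipulation of operator norms.
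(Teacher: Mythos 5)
Your proposal is correct and takes essentially the same route as the paper: both arguments reduce to the equivalence of norms on a finite-dimensional vector space over a complete non-archimedean field and then bound the coordinates of $g$ and $g^{-1}$ simultaneously. The paper phrases this by sandwiching $\| \cdot \|$ between two lattice norms and embedding $\GL(V)$ into $\End(V) \times_k \End(V)$ via $g \mapsto (g, g^{-1})$, which is just a coordinate-free version of your bound $\max_{i,j} |a_{ij}| \le C/c$ applied to both $g$ and its inverse.
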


\begin{proof} Let us see $\GL(V)$ as a closed subscheme of the affine scheme $\End(V) \times_k \End(V)$ through the closed embedding $g \mapsto (g, g^{-1})$. If we consider the subset 
$$ E = \{ \phi \in \End(V, k) : \| \phi(v) \| \le \| v\| \textup{ for all } v \in V  \},$$
then we have $\Stab_{\GL(V, k)}(\| \cdot \|) = (E \times E) \cap \GL(V, k)$. Therefore it suffices to show that the subset $E$ is bounded. Let $\mathcal{V}_1$, $\mathcal{V}_2$ be $k^\circ$-lattices of $V$ such that the associated norms on $V$ satisfy, for all $v \in V$,
$$ \| v \|_1 \le \| v \| \le \| v \|_2,$$
(they exist because the norms on $V$ are all equivalent). It follows, through the canonical isomorphism $\End(V) = \Hom_k(V, k) \otimes_k V$, that $E$ is a subset of 
$$\Hom_{k^\circ}(\mathcal{V}_2, k^\circ) \otimes_{k^\circ} \mathcal{V}_1. $$ In particular $E$ is bounded by definition.
\end{proof}

\begin{proposition}\label{Lemma:StabiliserIsCompactRational} If $\mathcal{G}$ is semi-simple and quasi-split, then $\Stab_{G(k)}(\| \cdot \|_{\mathcal{L}})$ is bounded. 
\end{proposition}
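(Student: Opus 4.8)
The plan is to
combine the finiteness of the adjoint-type representation (Lemma
\ref{Prop:FinitenessRepresentation}) with the boundedness of the stabiliser
of a norm on a vector space (Lemma \ref{Prop:BoundednessStabiliserNorm}).
The metric $\|\cdot\|_{\mathcal{L}}$ on $L$ induces, for any $n$ with
$L^{\otimes n}$ very ample, a natural metric on $L^{\otimes n}$ and hence,
via the $\mathcal{G}(k^\circ)$-stable lattice $H^0(\mathcal{X},
\mathcal{L}^{\otimes n})$ inside $H^0(X, L^{\otimes n})$, a norm
$\|\cdot\|_{H^0}$ on the $k$-vector space $V = H^0(X, L^{\otimes n})$. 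The
key observation is that every $g \in \Stab_{G(k)}(\|\cdot\|_{\mathcal{L}})$
acts on $V$ through an isometry of this norm: since $g$ preserves the metric
on $L$ fibrewise and respects the action on $X$, it sends the
unit-ball-defining data to itself, so $\rho(g) \in
\Stab_{\GL(V,k)}(\|\cdot\|_{H^0})$.

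Granting this, the argument runs as follows. First I would fix $n \ge 1$
with $L^{\otimes n}$ very ample and set $V = H^0(X, L^{\otimes n})$ with its
natural $G$-representation $\rho$. Next I would construct the norm
$\|\cdot\|_{H^0}$ on $V$: the integral sections $H^0(\mathcal{X},
\mathcal{L}^{\otimes n})$ form a $k^\circ$-lattice in $V$ (using properness
and flatness of $\mathcal{X}$ over $k^\circ$, so that cohomology commutes
with the base change to $k$), and I take the associated lattice-norm, or
equivalently the sup-norm defined by evaluating sections against
$\|\cdot\|_{\mathcal{L}^{\otimes n}}$ over $X(k)$; here the quasi-split
hypothesis guarantees that $X(k)$ is non-empty (Proposition
\ref{Prop:GroupIsQuasiSplit} is not needed, but quasi-splitness gives a
rational Borel, hence points of $X$). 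Then I would verify the inclusion
$$ \rho\bigl(\Stab_{G(k)}(\|\cdot\|_{\mathcal{L}})\bigr) \subset
\Stab_{\GL(V,k)}(\|\cdot\|_{H^0}), $$
which follows because preserving $\|\cdot\|_{\mathcal{L}}$ fibrewise and
compatibly with the $X$-action is exactly what it means to preserve the
induced norm on global sections. By Lemma
\ref{Prop:BoundednessStabiliserNorm} the right-hand side is bounded in
$\GL(V, k)$, so $\rho\bigl(\Stab_{G(k)}(\|\cdot\|_{\mathcal{L}})\bigr)$ is
bounded.

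To descend this to $G(k)$ itself I would invoke Lemma
\ref{Prop:FinitenessRepresentation}: since $G$ is semi-simple, $\rho \colon
G \to \GL(V)$ is a finite morphism, and the inverse image under a finite
morphism of a bounded set is bounded (the footnote argument via
$\P^n(k)$). Therefore $\Stab_{G(k)}(\|\cdot\|_{\mathcal{L}})$, being
contained in $\rho^{-1}$ of a bounded set, is bounded, as desired.

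The main obstacle I anticipate is the precise construction of the norm on
$V$ and the clean verification that stabilising the metric on $L$ forces
$\rho(g)$ to be a genuine isometry of $V$. One must be careful that the
$k^\circ$-lattice $H^0(\mathcal{X}, \mathcal{L}^{\otimes n})$ is preserved by
$\mathcal{G}(k^\circ)$ (this is the equivariance recalled in the reminders)
but a priori only \emph{scaled} in some controlled way by elements of the
larger stabiliser; the content of the argument is that the fibrewise
isometry condition upgrades this to an exact isometry of the sup-norm,
independent of any choice of lattice. Making the comparison between the
sup-norm and the lattice-norm explicit — and checking that the sup over
$X(k)$ genuinely computes the norm rather than merely bounding it — is the
delicate point; once it is settled, the two lemmas close the argument
immediately.
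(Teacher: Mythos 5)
Your proposal is correct and follows essentially the same route as the paper: one fixes $n$ with $L^{\otimes n}$ very ample, observes that $\Stab_{G(k)}(\|\cdot\|_{\mathcal{L}})$ fixes the sup-norm $\|s\|_{\sup} = \sup_{x \in X(k)} \|s\|_{\mathcal{L}^{\otimes n}}(x)$ on $V = H^0(X, L^{\otimes n})$, applies Lemma \ref{Prop:BoundednessStabiliserNorm} to bound the image under $\rho$, and uses the finiteness of $\rho$ (Lemma \ref{Prop:FinitenessRepresentation}) to pull boundedness back to $G(k)$. The one point to state more carefully is that nondegeneracy of the sup-norm needs $X(k)$ to be Zariski-dense in $X$, not merely non-empty (the paper gets this from the Zariski-density of $G(k)$ in $G$ via unirationality); your lattice-norm variant would sidestep this, but then you must still check that the stabiliser acts by exact isometries of the norm you feed into Lemma \ref{Prop:BoundednessStabiliserNorm}, which is immediate for the sup-norm.
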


\begin{proof} Let $n \ge 1$ be an integer such that $L^{\otimes n}$ is very ample, let $V \df H^0(X, L^{\otimes n})$ and let $\rho \colon G \to \GL(V)$ be the representation induced by the equivariant action of $G$ on $L^{\otimes n}$. According to Lemma \ref{Prop:FinitenessRepresentation}, $\rho$ is a finite morphism.

For every global section $s \in V$ let us set
$$ \| s \|_{\sup} \df \sup_{x \in X(k)} \| s \|_{\mathcal{L}^{\otimes n}}(x). $$
Remark that $\| \cdot \|_{\sup}$ is a norm on $V$ because $X(k)$ is non-empty and thus, by the Zariski-density of $G(k)$ in $G$, Zariski-dense in $X$ \cite[Theorem 1.1]{ConradUnirationality}. 

The subgroup $S = \Stab_{G(k)}(\| \cdot \|_{\mathcal{L}})$ fixes the norm $\| \cdot \|_{\sup}$, therefore its image in $\GL(V, k)$ through $\rho$ is bounded (Lemma \ref{Prop:BoundednessStabiliserNorm}). Since $\rho$ is a finite morphism, $S$ must be bounded too.
\end{proof}

\section{Proof of Theorem \ref{Thm:StabiliserMetricRational}} \label{sec:ProofOfMainTheoremRational}

In this section we prove Theorem  \ref{Thm:StabiliserMetricRational} and therefore we suppose that the group $G$ is semi-simple and quasi-split. 

\subsection{}  
In order to prove Theorem \ref{Thm:StabiliserMetricRational}, we start by noticing that the metric $\| \cdot \|_{\mathcal{L}}$ is invariant under $\mathcal{G}(k^\circ)$. Indeed, let $x \in X(k)$, $g \in \mathcal{G}(k^\circ)$ and let us denote by $\epsilon_{x}$, $\epsilon_{g \cdot x}$ the unique $k^\circ$-valued points of $\mathcal{X}$ that lift, by valuative criterion of properness, respectively the points $x$ and $g \cdot x$. Since $\mathcal{G}$ acts equivariantly on $\mathcal{L}$, the multiplication by $g$ induces an isomorphism of $k^\circ$-modules $$\epsilon_x^\ast \mathcal{L} \stackrel{\sim}{\too} \epsilon_{ g \cdot x}^\ast \mathcal{L},$$ 
extending the isomorphism of $k$-vector spaces $ x^\ast L \to (g \cdot x)^\ast L$. For a section $s \in x^\ast L$ let us write $g \cdot s$ its image in $(g \cdot x)^\ast L$. Since the isomorphism is defined at the level of $k^\circ$-modules, if $s_0$ is a generator of the $k^\circ$-module $\epsilon_x^\ast \mathcal{L}$, then $g \cdot s_0$ generates the $k^\circ$-module $\epsilon_{ g \cdot x}^\ast \mathcal{L}$. In particular, for every section $s \in x^\ast L$, we have
$$ \| g \cdot s \|_{\mathcal{L}}(g \cdot x) = \| s \|_\mathcal{L}(x).$$

We are thus left with proving the inclusion
\begin{equation} \label{eq:InclusionInTheStabiliserRational}  \Stab_{G(k)}(\| \cdot \|_{\mathcal{L}}) \subset \mathcal{G}(k^\circ). \end{equation}

Since $G$ is supposed to be quasi-split, it contains a Borel subgroup $B$ and by the Iwasawa decomposition (Proposition \ref{Prop:TransitiveActionOnBorelVariety} (\ref{Prop:IwasawaDecomposition})), we have
$$ G(k) = \mathcal{G}(k^\circ) \cdot B(k).$$
Therefore, in order to prove the inclusion \eqref{eq:InclusionInTheStabiliserRational}, it suffices to prove the following : 

\begin{lemma} \label{lem:ReductionToABorelRational}With the notations just introduced, let $\mathcal{B}$ be the unique Borel subgroup of $\mathcal{G}$ lifting $B$. Then, we have
$$  \Stab_{G(k)}(\| \cdot \|_{\mathcal{L}}) \cap B(k) = \mathcal{B}(k^\circ). $$
\end{lemma}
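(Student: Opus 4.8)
The plan is to prove the two inclusions separately, the non-trivial one being $\Stab_{G(k)}(\|\cdot\|_{\mathcal{L}}) \cap B(k) \subseteq \mathcal{B}(k^\circ)$. The reverse inclusion is immediate: we have already checked that $\|\cdot\|_{\mathcal{L}}$ is $\mathcal{G}(k^\circ)$-invariant, and $\mathcal{B}(k^\circ) \subseteq \mathcal{G}(k^\circ) \cap B(k)$. Write $S = \Stab_{G(k)}(\|\cdot\|_{\mathcal{L}})$. By Proposition \ref{Prop:TransitiveActionOnBorelVariety} (\ref{Prop:MaxTorusInABorel}) the Borel $\mathcal{B}$ contains a maximal torus $\mathcal{T}$; let $\mathcal{U} = \rad^u(\mathcal{B})$, let $\mathcal{B}^-$ be the Borel opposite to $\mathcal{B}$ with respect to $\mathcal{T}$, and denote by $T, U, B^-$ their generic fibres, so that $B = TU$ with $U$ normal and $\mathcal{B} = \mathcal{T}\mathcal{U}$ with $\mathcal{U}$ normal.

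I would first dispose of the torus direction using boundedness. The subgroup $S \cap B(k)$ maps, under the quotient morphism $B \to B/U = T$, onto a subgroup of $T(k)$; writing $g = tu$ with $t \in T(k)$, $u \in U(k)$, the element $t$ is exactly this image. Since $S$ is bounded (Proposition \ref{Lemma:StabiliserIsCompactRational}) and a morphism of finite-type $k$-schemes sends bounded sets to bounded sets, this image is a bounded subgroup of $T(k)$, hence contained in the maximal bounded subgroup $\mathcal{T}(k^\circ)$ (Proposition \ref{Prop:MaxCompactSubgroupTorus}). As $\mathcal{T}(k^\circ) \subseteq \mathcal{G}(k^\circ) \subseteq S$, replacing $g$ by $t^{-1}g$ reduces the problem to showing that every $g \in S \cap U(k)$ lies in $\mathcal{U}(k^\circ)$; then $g = tu \in \mathcal{T}(k^\circ)\,\mathcal{U}(k^\circ) = \mathcal{B}(k^\circ)$.

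For the unipotent direction I would bring in the metric through Proposition \ref{Prop:MetricOfTheEigenvector}. Scale the $B$-eigenvector of Lemma \ref{Lemma:NonVanishingLocusOfTheEigenvector} to a primitive element $s$ of the lattice $H^0(\mathcal{X}, \mathcal{L})$, so that its reduction $\tilde s$ is non-zero, and set $\phi = \|s\|_{\mathcal{L}} \colon X(k) \to \R_+$; then $\{\phi = 1\} = \Opp(\mathcal{B}, k^\circ)$. The character through which $B$ acts on $s$ is trivial on $U$, so for $g \in S \cap U(k)$ we have $g \cdot s = s$, and the defining relation $\|g \cdot s\|_{\mathcal{L}}(g \cdot x) = \|s\|_{\mathcal{L}}(x)$ yields $\phi(g \cdot x) = \phi(x)$ for all $x \in X(k)$. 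Thus $g$ preserves the level set $\Opp(\mathcal{B}, k^\circ)$. The base point $[\mathcal{B}^-]$, seen in $X(k) = \mathcal{X}(k^\circ)$ via the valuative criterion, lies in $\Opp(\mathcal{B}, k^\circ)$, whence $g \cdot [\mathcal{B}^-] = [g\mathcal{B}^-g^{-1}] \in \Opp(\mathcal{B}, k^\circ)$. Under the big-cell isomorphism $\mathcal{U} \simeq \Opp(\mathcal{B})$, $v \mapsto [v\mathcal{B}^-v^{-1}]$ — which carries $\mathcal{U}(k^\circ)$ bijectively onto $\Opp(\mathcal{B}, k^\circ)$ and the identity onto $[\mathcal{B}^-]$ — the point $g \cdot [\mathcal{B}^-]$ corresponds to $g$ itself; hence $g \in \mathcal{U}(k^\circ)$, as required.

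The steps above are individually short, so I expect the difficulty to lie in the supporting identifications rather than in a single hard estimate. The point needing the most care is the last one: one must be sure that the big-cell isomorphism $\mathcal{U} \simeq \Opp(\mathcal{B})$ is defined over $k^\circ$, that it sends the identity to $[\mathcal{B}^-]$ and left $U$-translation to the conjugation action on opposite Borels, and that it therefore induces the bijection $\mathcal{U}(k^\circ) \simeq \Opp(\mathcal{B}, k^\circ)$ on integral points — all of which rest on the structure of the opposite open cell over $k^\circ$ recalled in the reminders. A secondary point to verify is that the $B$-eigenvector can indeed be chosen integrally with non-vanishing reduction, so that Proposition \ref{Prop:MetricOfTheEigenvector} applies verbatim.
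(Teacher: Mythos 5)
Your proposal is correct and follows essentially the same route as the paper: the torus part is handled by boundedness of the stabiliser together with the maximality of $\mathcal{T}(k^\circ)$ in $T(k)$, and the unipotent part by the $B$-eigenvector, Proposition \ref{Prop:MetricOfTheEigenvector}, and the big-cell identification $\rad^u(\mathcal{B}) \iso \Opp(\mathcal{B})$ over $k^\circ$. The only (welcome) difference is that you make explicit two points the paper leaves implicit, namely that $g \cdot s = s$ for unipotent $g$ because the eigencharacter is trivial on $\rad^u(B)$, and that the conclusion follows from applying $g$ to the single base point $[\mathcal{B}^{\op}]$.
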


\subsection{} \label{par:ReductionToRadical} 
Let us prove Lemma \ref{lem:ReductionToABorelRational}. Let us simplify the notation by writing $S$ instead of $\Stab_{G(k)}(\| \cdot \|_{\mathcal{L}})$. Let $\mathcal{T}$ be a maximal $k^\circ$-torus of $\mathcal{B}$ (it exists by Proposition \ref{Prop:TransitiveActionOnBorelVariety} (\ref{Prop:MaxTorusInABorel})) and $\rad^u(\mathcal{B})$ be the unipotent radical of $\mathcal{B}$. Let $T$ and $\rad^u(B)$ be their generic fibres.

The inclusion $\mathcal{T} \subset \mathcal{B}$ induces an isomorphism $\mathcal{T} \iso \mathcal{B} / \rad^u(\mathcal{B})$ \cite[XXVI, Proposition 1.6]{SGA3}. Thanks to this identification, let us write $\pi \colon \mathcal{B} \to \mathcal{T}$ the quotient map. 

\begin{claim} We have the following equalities:
$$S \cap T(k) = \mathcal{T}(k^\circ) , \quad \pi(S \cap B(k)) = \mathcal{T}(k^\circ).$$
\end{claim}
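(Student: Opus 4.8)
The plan is to deduce both equalities from two facts already at our disposal: the stabiliser $S = \Stab_{G(k)}(\| \cdot \|_{\mathcal{L}})$ is bounded (Proposition \ref{Lemma:StabiliserIsCompactRational}, applicable since $\mathcal{G}$ is semi-simple and quasi-split), and $\mathcal{T}(k^\circ)$ is the unique maximal bounded subgroup of $T(k)$ (Proposition \ref{Prop:MaxCompactSubgroupTorus}), hence contains every bounded subgroup of $T(k)$. This reduces the whole Claim to a boundedness bookkeeping, bypassing any further use of the metric.

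First I would dispose of the inclusions $\supseteq$. Since $\mathcal{T} \subset \mathcal{G}$ we have $\mathcal{T}(k^\circ) \subset \mathcal{G}(k^\circ) \subset S$, the last inclusion being the $\mathcal{G}(k^\circ)$-invariance of the metric established at the beginning of the section. As $\mathcal{T}(k^\circ) \subset T(k)$ this yields $\mathcal{T}(k^\circ) \subset S \cap T(k)$; likewise $\mathcal{T}(k^\circ) \subset \mathcal{B}(k^\circ) \subset S \cap B(k)$, and applying $\pi$ — whose restriction to $\mathcal{T}$ is the identity, via the isomorphism $\mathcal{T} \iso \mathcal{B} / \rad^u(\mathcal{B})$ — gives $\mathcal{T}(k^\circ) = \pi(\mathcal{T}(k^\circ)) \subset \pi(S \cap B(k))$.

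For the inclusions $\subseteq$ I would argue by boundedness. The Borel $B$ and the torus $T$ are affine closed subschemes of $G$, so $S \cap B(k)$ and $S \cap T(k)$ are bounded subgroups (bounded because they are contained in the bounded set $S$). The quotient $\pi \colon B \to T$ is a morphism of affine $k$-schemes of finite type, and such a morphism carries bounded subsets to bounded subsets; therefore $\pi(S \cap B(k))$ is a bounded subgroup of $T(k)$. Being bounded subgroups of $T(k)$, both $S \cap T(k)$ and $\pi(S \cap B(k))$ are contained in the maximal bounded subgroup $\mathcal{T}(k^\circ)$ by Proposition \ref{Prop:MaxCompactSubgroupTorus}. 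Combined with the previous paragraph, this establishes both equalities.

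The one point that requires care — and which I regard as the crux — is the assertion that $\pi$ preserves boundedness: this rests on $B$ and $T$ being affine, so that $\pi$ is given in coordinates by polynomials, which are bounded on bounded sets, together with the invariance of the notion of boundedness under such morphisms. Alternatively, the containment $S \cap T(k) \subseteq \mathcal{T}(k^\circ)$ can be obtained concretely, without invoking Proposition \ref{Lemma:StabiliserIsCompactRational}: for $t \in S \cap T(k)$ one runs over the Borel subgroups containing $\mathcal{T}$, tests the metric against their eigenvectors in $H^0(\mathcal{X}, \mathcal{L})$ at the corresponding opposite base point using Proposition \ref{Prop:MetricOfTheEigenvector}, and thereby forces $|\chi(t)| = 1$ for the weights $\chi$ occurring; since these weights form the Weyl orbit of $2\rho$, which spans $X^\ast(T)_{\Q}$ because $2\rho$ is regular and $G$ is semi-simple (after passing to a splitting extension if necessary), this gives $t \in \mathcal{T}(k^\circ)$. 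I would nonetheless prefer the boundedness argument for its brevity and uniformity.
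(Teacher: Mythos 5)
Your argument is correct and is essentially the paper's own proof: both inclusions $\subseteq$ follow from the boundedness of $S$ (Proposition \ref{Lemma:StabiliserIsCompactRational}) together with the maximality of $\mathcal{T}(k^\circ)$ among bounded subgroups of $T(k)$ (Proposition \ref{Prop:MaxCompactSubgroupTorus}), and both inclusions $\supseteq$ follow from $\mathcal{T}(k^\circ)\subset\mathcal{G}(k^\circ)\subset S$. Your explicit justification that $\pi$, being a morphism of affine $k$-schemes of finite type, carries bounded sets to bounded sets is a detail the paper leaves implicit, and is worth stating.
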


\begin{proof}[Proof of the Claim] Since the stabiliser $S$ is a bounded subgroup (Proposition \ref{Lemma:StabiliserIsCompactRational}), the subgroups $S \cap T(k)$, $\pi(S \cap B(k))$ of $T(k)$ are bounded too. Therefore they must be contained in $\mathcal{T}(k^\circ)$ because the latter is the unique maximal bounded subgroup of $T(k)$ (Proposition \ref{Prop:MaxCompactSubgroupTorus}).

On the other hand $S$ contains $\mathcal{G}(k^\circ)$ by hypothesis and thus it contains $\mathcal{T}(k^\circ)$. So both $S \cap T(k)$ and $\pi(S \cap B(k))$ contain $\mathcal{T}(k^\circ)$, whence the claim.
\end{proof}

Since $B(k)$ is the semi-direct product of $\rad^u(B, k)$ and $T(k)$, in order to conclude the proof of Lemma \ref{lem:ReductionToABorelRational}, it is sufficient to prove the following:

\begin{lemma} \label{lem:ReductionToRadicalRational} With the notations introduced above, we have
$$  \Stab_{G(k)}(\| \cdot \|_{\mathcal{L}}) \cap \rad^u(B, k) = \rad^u(\mathcal{B}, k^\circ). $$
\end{lemma}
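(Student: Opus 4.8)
The goal is to show that a $k$-point $u$ of $\rad^u(B)$ stabilises the metric $\|\cdot\|_{\mathcal{L}}$ if and only if it comes from a $k^\circ$-point of $\rad^u(\mathcal{B})$. The inclusion $\rad^u(\mathcal{B}, k^\circ) \subset S \cap \rad^u(B,k)$ is immediate: $\rad^u(\mathcal{B}, k^\circ) \subset \mathcal{G}(k^\circ) \subset S$ by hypothesis, and these points clearly lie in $\rad^u(B,k)$. So the real content is the reverse inclusion, and the strategy is to \emph{geometrise} the unipotent radical by identifying it with an open cell of the flag variety, on which the metric was completely computed in Proposition \ref{Prop:MetricOfTheEigenvector}.

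\textbf{Identifying the radical with an open cell.} The key geometric input I would use is that the big cell map
$$ \rad^u(B) \too \Opp(B), \qquad u \longmapsto u \cdot B' u^{-1}, $$
sending $u$ to the conjugate of the opposite Borel $B'$ (defined with respect to $\mathcal{T}$) is an isomorphism of $\rad^u(B)$ onto the open subscheme $\Opp(B)$ of $X$, and that this works integrally: $\rad^u(\mathcal{B}) \xrightarrow{\sim} \Opp(\mathcal{B})$ as $k^\circ$-schemes, with $\rad^u(\mathcal{B})(k^\circ) = \Opp(\mathcal{B}, k^\circ)$. Under this identification the point of $X(k)$ attached to $u \in \rad^u(B,k)$ is exactly $u \cdot x_0$, where $x_0 \in \Opp(B,k)$ is the point corresponding to the opposite Borel $B'$; and the origin $x_0$ itself lifts to $\mathcal{B}'$, hence $x_0 \in \Opp(\mathcal{B}, k^\circ)$.

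\textbf{Using the eigenvector.} Now I would fix an eigenvector $s \in H^0(\mathcal{X}, \mathcal{L})$ for $\mathcal{B}$ whose reduction $\tilde{s}$ is non-zero (this exists by Lemma \ref{Lemma:NonVanishingLocusOfTheEigenvector}(1) applied over $k^\circ$, clearing denominators and scaling so that $\tilde s \neq 0$). By Proposition \ref{Prop:MetricOfTheEigenvector} the function $x \mapsto \|s\|_{\mathcal{L}}(x)$ takes the value $1$ precisely on $\Opp(\mathcal{B}, k^\circ)$ and is non-zero exactly on $\Opp(B,k)$. The plan is to evaluate the stabiliser condition at the specific point $x_0$. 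Since $s$ is a $\mathcal{B}$-eigenvector with some character $\chi$, for $u \in \rad^u(B,k)$ one has $u \cdot s = \chi(u)\, s = s$ because $\chi$ is trivial on the unipotent radical. Therefore, assuming $u \in S$ and evaluating at $x_0$ gives
$$ \|s\|_{\mathcal{L}}(x_0) \;=\; \|u \cdot s\|_{\mathcal{L}}(u \cdot x_0) \;=\; \|s\|_{\mathcal{L}}(u \cdot x_0). $$
Since $x_0 \in \Opp(\mathcal{B}, k^\circ)$ we have $\|s\|_{\mathcal{L}}(x_0)=1$, so $\|s\|_{\mathcal{L}}(u\cdot x_0)=1$, which by Proposition \ref{Prop:MetricOfTheEigenvector} forces $u \cdot x_0 \in \Opp(\mathcal{B}, k^\circ)$. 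Translating back through the cell isomorphism, $u \cdot x_0 \in \Opp(\mathcal{B}, k^\circ) = \rad^u(\mathcal{B})(k^\circ)\cdot x_0$ means exactly that $u \in \rad^u(\mathcal{B})(k^\circ)$, as desired.

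\textbf{Main obstacle.} The step I expect to require the most care is making the invariance of $\tilde s$ interact correctly with the conjugation action, i.e.\ checking that the eigencharacter $\chi$ really is trivial on $\rad^u(B)$ and that the equivariant $\mathcal{G}$-action on $\mathcal{L}$ restricts to the honest stabiliser action used to define $S$, so that $u \cdot s = s$ genuinely holds and the evaluation identity above is legitimate. A subtler point is verifying that the integral cell identification $\rad^u(\mathcal{B}) \xrightarrow{\sim} \Opp(\mathcal{B})$ is compatible with reduction, so that ``$\|s\|_{\mathcal{L}}(u\cdot x_0)=1$'' translates into an integrality statement about $u$ rather than merely a statement about its generic fibre — this is where one cashes in the fact (from Proposition \ref{Prop:MetricOfTheEigenvector}) that value $1$ detects $\Opp(\mathcal{B},k^\circ)$ rather than just $\Opp(B,k)$.
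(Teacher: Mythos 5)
Your argument is correct and is essentially the paper's own proof: both use the integral big-cell isomorphism $\rad^u(\mathcal{B})\stackrel{\sim}{\to}\Opp(\mathcal{B})$ together with Proposition \ref{Prop:MetricOfTheEigenvector} to identify $\Opp(\mathcal{B},k^\circ)$ as the unit level set of $\|s\|_{\mathcal{L}}$ for a $B$-eigenvector $s$ with non-zero reduction, and then exploit invariance of $s$ under $\rad^u(B)$. The only cosmetic difference is that you evaluate at the single base point $x_0$ corresponding to $\mathcal{B}^{\op}$, whereas the paper states the stability of the whole set $\Opp(\mathcal{B},k^\circ)$ under left translation and then specialises to the identity --- the same computation.
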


\subsection{} \label{par:EndOfTheProof} 
Let us prove Lemma \ref{lem:ReductionToRadicalRational}. Let $s \in H^0(\mathcal{X}, \mathcal{L})$ be an eigenvector for $B$ whose reduction $\tilde{s}$ is non-zero. Then, by Proposition \ref{Prop:MetricOfTheEigenvector} we have
$$ \Opp(\mathcal{B}, k^\circ) = \{ x \in X(k) : \| s\|_{\mathcal{L}}(x) = 1 \}.$$
Since the subgroup $S \cap \rad^u(B, k)$ fixes the metric, $\Opp(\mathcal{B}, k^\circ)$ is stable under the action of $S \cap \rad^u(B, k)$.

On the other hand, we can identify in a $\mathcal{B}$-equivariant way $\Opp(\mathcal{B})$ with the unipotent radical of $\mathcal{B}$. To do this, let $\mathcal{B}^\op$ be the Borel subgroup of $\mathcal{G}$ opposite to $\mathcal{B}$ relatively to $\mathcal{T}$. Then the map $\rad^u(\mathcal{B}) \to \Opp(\mathcal{B})$ defined by  $b \mapsto b \mathcal{B}^\op b^{-1}$ is an isomorphism \cite[XXVI, Corollaire 4.3.5]{SGA3}. 

Through this identification, the action of $\rad^u(B)$ on $\Opp(B)$ becomes the action of $\rad^u(B)$ on itself by left multiplication. Moreover, saying that $\Opp(\mathcal{B}, k^\circ)$ is stable under the action of $S \cap \rad^u(B)$ translates into the the fact that the unipotent radical $\rad^u(\mathcal{B}, k^\circ)$ is stable under the left multiplication by $S \cap \rad^u(B)$. This obviously implies that $S \cap \rad^u(B)$ is contained in $\rad^u(\mathcal{B}, k^\circ)$, which concludes the proof of Lemma \ref{lem:ReductionToRadicalRational}, thus of Lemma \ref{lem:ReductionToABorelRational} and Theorem \ref{Thm:StabiliserMetricRational}.  \qed

\section{Reduction to the quasi-split case} \label{sec:InfiniteResidueField} In this section we deduce Theorem \ref{Thm:HyperspecialSubgroupIntro} when $G$ is not quasi-split from Theorem \ref{Thm:HyperspecialSubgroupQuasiSplit}. The reduction to the quasi-split case makes an essential use of the concept of holomorphically convex envelope, that we pass in review in the first paragraph.

\subsection{Holomorphically convex envelopes} We briefly discuss holomorphically convex envelopes. The naive point of view we opt for, far from being well-suited to study holomorphically convex spaces,  will suffice to draw the result that we are interested in (\textit{cf}. Proposition \ref{Prop:HolomorphicallyConvexEnvelopeRationalPoints}).

\begin{definition} \label{def:HolomorphicallyConvexEnvelope}Let $V$ be a affine $k$-scheme of finite type, $S \subset V(k)$ a bounded subset and $K$ be an analytic extension of $k$. Let $K[V]$ be the $K$-algebra of regular functions on $V \times_k K$. Then, for every $f \in K[V]$, let us set $$ \| f \|_S \df \sup_{s \in S} |f(s)|. $$
The \textit{$K$-holomorphically convex envelope} of $S$ is the subset 
$$ \hat{S}_K \df \{ x \in V(K) : |f(x)| \le \| f\|_S \textup{ for all } f \in K[V]\}. $$
\end{definition}

\begin{proposition} \label{prop:HolomorphicallyConvexEnvelopesClosedImmersions} Let $f \colon V \to W$ be a closed immersion between affine $k$-schemes of finite type. Let $S \subset V(k)$ be a bounded subset and $K$ an analytic extension of $k$. Then,
$$ f(\hat{S}_{K}) = \widehat{f(S)}_{K}.$$
\end{proposition}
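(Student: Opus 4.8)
The plan is to translate everything into commutative algebra via the surjection of coordinate rings induced by the closed immersion, and then to observe that the two defining conditions literally coincide. Write $A \df K[V]$ and $B \df K[W]$; since $f$ is a closed immersion the comorphism $f^\sharp \colon B \to A$ is surjective, with kernel $I$, and $f$ identifies $V(K)$ with the closed subset $\{\, y \in W(K) : h(y) = 0 \text{ for all } h \in I \,\}$ of $W(K)$. The first thing I would record is the compatibility of sup-norms under pullback: for every $h \in B$ one has
$$ \| h \|_{f(S)} = \sup_{s \in S} |h(f(s))| = \sup_{s \in S} |(f^\sharp h)(s)| = \| f^\sharp h \|_S, $$
since $h \circ f = f^\sharp h$ and $f(S) = \{\, f(s) : s \in S \,\}$. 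Here $f(S)$ is bounded in $W(k)$ because $f$ is a closed immersion (so that the right-hand side of the proposition makes sense).

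The key step — and the only point requiring an actual argument — is the inclusion $\widehat{f(S)}_K \subseteq f(V(K))$, i.e. that the envelope of $f(S)$ cannot escape the closed subscheme $V$. For this I would use that every $h \in I$ vanishes on $f(S)$: indeed $S \subset V(k)$ gives $f(S) \subset V$, on which the functions in $I$ are identically zero, so $\|h\|_{f(S)} = 0$. Consequently, if $y \in \widehat{f(S)}_K$ then $|h(y)| \le \|h\|_{f(S)} = 0$ for every $h \in I$, which forces $h(y) = 0$ for all $h \in I$; by the description of $V(K)$ above this says exactly that $y \in f(V(K))$.

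It then remains to match the two membership conditions. Fix $x \in V(K)$ and put $y = f(x)$. By the sup-norm compatibility and the identity $h(y) = (f^\sharp h)(x)$, the condition ``$|h(y)| \le \|h\|_{f(S)}$ for all $h \in B$'' is equivalent to ``$|(f^\sharp h)(x)| \le \|f^\sharp h\|_S$ for all $h \in B$''. Since $f^\sharp$ is surjective, as $h$ runs over $B$ the element $f^\sharp h$ runs over all of $A = K[V]$; hence this condition is equivalent to ``$|g(x)| \le \|g\|_S$ for all $g \in K[V]$'', that is, to $x \in \hat{S}_K$. Combining this equivalence with the inclusion of the previous paragraph yields $\widehat{f(S)}_K = f(\hat{S}_K)$, as desired. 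I do not expect any serious obstacle: once the envelope is shown to sit inside $V$, the statement is a formal consequence of the surjectivity of $f^\sharp$.
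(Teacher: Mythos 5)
Your argument is correct and is precisely the ``direct consequence of the definitions'' that the paper leaves to the reader: the sup-norm compatibility $\|h\|_{f(S)} = \|f^\sharp h\|_S$, the surjectivity of $f^\sharp$, and the observation that functions in $\Ker f^\sharp$ vanish on $f(S)$ (forcing the envelope to stay inside the closed subscheme) together give both inclusions. Nothing is missing.
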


The proof is left to reader as a direct consequence of the definitions.

\begin{proposition} \label{Prop:HolomorphicallyConvexEnvelopeRationalPoints} Let $\mathcal{H}$ be a smooth affine group $k^\circ$-scheme with connected geometric fibres. Let us suppose that its special fibre $\tilde{H}$ is unirational and that the residue field $\tilde{k}$ is infinite. 

Then, for every analytic extension $K$ of $k$, the $K$-holomorphically convex envelope of $\mathcal{H}(k^\circ)$ is $\mathcal{H}(K^\circ)$. 
\end{proposition}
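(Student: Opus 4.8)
The plan is to establish the two inclusions $\hat{S}_K \subseteq \mathcal{H}(K^\circ)$ and $\mathcal{H}(K^\circ) \subseteq \hat{S}_K$ separately, where $S \df \mathcal{H}(k^\circ)$. Write $\mathcal{H} = \Spec A$ with $A$ a smooth $k^\circ$-algebra, so that $H = \Spec(A \otimes_{k^\circ} k)$ and $K[H] = A \otimes_{k^\circ} K$; denote by $\tilde{A} = A \otimes_{k^\circ} \tilde{k}$ the coordinate ring of $\tilde{H}$ and by $\tilde{K}$ the residue field of $K$. Choosing a closed immersion $\mathcal{H} \hookrightarrow \A^n_{k^\circ}$ one sees first that $S \subseteq (k^\circ)^n$ is bounded, so $\|T_i\|_S \le 1$ for the coordinate functions. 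Hence every $x \in \hat{S}_K$ satisfies $|T_i(x)| \le 1$, i.e. $x \in (K^\circ)^n \cap H(K) = \mathcal{H}(K^\circ)$ (a $K$-point with integral coordinates factors through $K^\circ$ since the $T_i$ generate $A$). By Proposition \ref{prop:HolomorphicallyConvexEnvelopesClosedImmersions} the envelope is compatible with this embedding, which gives $\hat{S}_K \subseteq \mathcal{H}(K^\circ)$; this step uses neither hypothesis on $\tilde{H}$.

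For the reverse inclusion, note that $k^\circ \subseteq K^\circ$ yields $\mathcal{H}(k^\circ) \subseteq \mathcal{H}(K^\circ)$, so that $\mathcal{H}(K^\circ) \subseteq \hat{S}_K$ is equivalent to the sup-norm identity
$$ \sup_{x \in \mathcal{H}(K^\circ)} |f(x)| = \sup_{s \in \mathcal{H}(k^\circ)} |f(s)| \qquad (f \in A \otimes_{k^\circ} K), $$
the inequality $\ge$ being automatic. Writing any $f$ as $\lambda g$ with $\lambda \in K^\times$ and $g \in A \otimes_{k^\circ} K^\circ$ integral, I reduce to integral $g$. The crux is then the following dichotomy, which I would isolate as a lemma: for $g \in A \otimes_{k^\circ} K^\circ$ with reduction $\tilde{g} \in \tilde{A} \otimes_{\tilde{k}} \tilde{K}$, one has $\sup_{\mathcal{H}(k^\circ)} |g| = 1$ iff $\tilde{g} \neq 0$ iff $\sup_{\mathcal{H}(K^\circ)} |g| = 1$, both suprema being $< 1$ otherwise.

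To prove this dichotomy I would argue on the special fibre. By Hensel's lemma and the smoothness of $\mathcal{H}$ (resp. of $\mathcal{H} \times_{k^\circ} K^\circ$) the reduction maps $\mathcal{H}(k^\circ) \to \tilde{H}(\tilde{k})$ and $\mathcal{H}(K^\circ) \to \tilde{H}_{\tilde{K}}(\tilde{K})$ are surjective, and for integral $g$ one has $|g(s)| = 1 \iff \tilde{g}(\tilde{s}) \neq 0$. Thus $\sup_{\mathcal{H}(K^\circ)} |g| = 1$ iff $\tilde{g}$ is non-zero at some $\tilde{K}$-point of $\tilde{H}_{\tilde{K}}$; since $\tilde{H}$ is unirational and $\tilde{K} \supseteq \tilde{k}$ is infinite, $\tilde{H}_{\tilde{K}}(\tilde{K})$ is Zariski-dense, so this holds exactly when $\tilde{g} \neq 0$. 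On the $\tilde{k}$-side, expanding $\tilde{g} = \sum_j \tilde{g}_j e_j$ in a $\tilde{k}$-basis $(e_j)$ of $\tilde{K}$ shows that $\tilde{g}$ vanishes on all of $\tilde{H}(\tilde{k})$ iff every $\tilde{g}_j \in \tilde{A}$ does; as $\tilde{A}$ is reduced and $\tilde{H}(\tilde{k})$ is Zariski-dense (again by unirationality and $\tilde{k}$ infinite), this forces all $\tilde{g}_j = 0$, i.e. $\tilde{g} = 0$. Hence $\sup_{\mathcal{H}(k^\circ)} |g| = 1$ iff $\tilde{g} \neq 0$ as well. This is exactly where both hypotheses on $\tilde{H}$ enter, and I expect the bridge between $\tilde{k}$-points and $\tilde{K}$-points to be the main obstacle.

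It remains to pass from the dichotomy to the full identity for integral $g$, which I would do by factoring out the gauge: using $A \otimes_{k^\circ} K^{\circ\circ} = \bigcup_{|\mu| < 1} \mu\,(A \otimes_{k^\circ} K^\circ)$, an integral $g$ with $\tilde{g} = 0$ can be written $g = \mu g'$ with $|\mu| < 1$ and $g'$ integral, and both suprema scale by $|\mu|$; iterating reduces to the primitive case $\tilde{g} \neq 0$ already settled, where both suprema equal $1$. The attainment of the gauge is immediate when $K$ is discretely valued, and the general case follows from the same reduction by a routine limiting argument; I regard this value-group bookkeeping as the only remaining (and purely formal) point, the genuine content being the Hensel lifting together with the Zariski-density of the rational points of the unirational special fibre over its infinite residue field.
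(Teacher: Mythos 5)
Your core mechanism is the right one and is essentially the paper's: Hensel lifting along the smooth reduction map together with Zariski-density of $\tilde{H}(\tilde{k})$ (from unirationality and $\tilde{k}$ infinite) shows that an integral $g$ with $\tilde{g}\neq 0$ attains absolute value $1$ already on $\mathcal{H}(k^\circ)$, and your first inclusion $\hat{S}_K\subseteq\mathcal{H}(K^\circ)$ via coordinates is fine. The gap is in the last step, and it is not ``purely formal value-group bookkeeping'': to conclude you need the gauge
$$ \inf\{|\lambda| : g/\lambda \in A\otimes_{k^\circ}K^\circ,\ \lambda\in K^\times\} $$
to be \emph{attained} (equivalently, to take values in $|K|$) when the valuation is dense. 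Your iteration $g=\mu_1\mu_2\cdots\mu_n g_n$ produces a decreasing sequence $|\mu_1\cdots\mu_n|$ bounded below by the gauge, but nothing forces it to terminate at a $g_n$ with $\tilde{g}_n\neq 0$, nor to tend to $0$: the $|\mu_j|$ may tend to $1$ fast enough that the product converges to some $r>0$ while $\tilde{g}_n=0$ for all $n$. In that regime you only ever learn upper bounds $\sup_{\mathcal{H}(k^\circ)}|g|\le|\mu_1\cdots\mu_n|$ and $\sup_{\mathcal{H}(K^\circ)}|g|\le|\mu_1\cdots\mu_n|$, and no lower bound on the $k^\circ$-side, so the identity of the two suprema does not follow. (In the discretely valued case each $|\mu_j|\le|\pi|$, so the product either terminates or tends to $0$ and both suprema vanish; that is why the discrete case really is immediate.)

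This attainment statement is exactly Proposition \ref{prop:RationalityOfSeminorms}, to which the paper devotes Appendix \ref{sec:SeminormsAndIntegralModels}: it is proved by presenting $K^\circ[\mathcal{H}]$ as a quotient of a polynomial ring, passing to completions, and invoking the open mapping theorem for non-archimedean Banach spaces to see that the quotient seminorm from the Tate algebra is attained. A limiting argument inside $A\otimes_{k^\circ}K$ alone does not suffice, because $A$ need not be free as a $k^\circ$-module, so there is no preferred representation $g=\sum a_ic_i$ realizing the gauge. Once you grant that the gauge is attained, your argument closes exactly as the paper's does: an integral $g$ of gauge $1$ has $\tilde{g}\neq 0$, and your dichotomy applies. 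So the proposal should be counted as correct modulo this one non-trivial analytic input, which you have underestimated rather than supplied.
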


We will use the previous Proposition only when $\mathcal{H}$ is a reductive $k^\circ$-group: over a field reductive groups  are indeed unirational varieties \cite[Theorem 1.1]{ConradUnirationality} so that the hypotheses are fulfilled.

In order to prove Proposition \ref{Prop:HolomorphicallyConvexEnvelopeRationalPoints}, let $H$ be the generic fibre of $\mathcal{H}$ and let $K[H]$, $K^\circ[\mathcal{H}]$ be respectively the $k$-algebra of regular functions of $H \times_k K$ and the $k^\circ$-algebra of regular functions on $\mathcal{H} \times_{k^\circ} K^\circ$. For every $f \in K[H]$ let us set 
$$ \| f \|_{K^\circ[\mathcal{H}]} \df \inf \{ |\lambda| : f / \lambda \in K^\circ[\mathcal{H}], \lambda \in K^\times\}.$$ 
The function $ \| \cdot \|_{K^\circ[\mathcal{H}]}$ is a semi-norm on the $K$-algebra $K[H]$ and it takes values in $|K|$. This very last property is crucial for us and it is trivial if the valuation of $K$ is discrete, while, when the valuation is dense, it is known to experts in non-archimedean geometry. A proof of this is given in Appendix \ref{sec:SeminormsAndIntegralModels} (\textit{cf.} Proposition \ref{prop:RationalityOfSeminorms}) as I cannot point out a suitable reference.  Coming back to the proof of Proposition \ref{Prop:HolomorphicallyConvexEnvelopeRationalPoints}, let us remark that we have
$$ \mathcal{H}(K^\circ) = \{ h \in H(K) : |f(h)| \le \| f\|_{K^\circ[\mathcal{H}]} \textup{ for all } f \in K[H]\},$$
so that it suffices to prove the following:

\begin{lemma} For every $f \in K[H]$ we have
$$ \| f \|_{K^\circ[\mathcal{H}]} = \| f \|_{\mathcal{H}(k^\circ)} \df \sup_{h \in \mathcal{H}(k^\circ)} |f(h)|.$$
\end{lemma}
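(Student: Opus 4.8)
The plan is to prove the two inequalities separately, the inequality $\|f\|_{\mathcal{H}(k^\circ)} \le \|f\|_{K^\circ[\mathcal{H}]}$ being formal and the reverse one carrying the substance of the statement. For the easy direction, observe that whenever $\lambda \in K^\times$ satisfies $f/\lambda \in K^\circ[\mathcal{H}]$, evaluating the integral function $f/\lambda$ at any point $h \in \mathcal{H}(k^\circ) \subset \mathcal{H}(K^\circ)$ yields an element of $K^\circ$, whence $|f(h)| \le |\lambda|$; taking the supremum over $h$ and the infimum over such $\lambda$ gives $\|f\|_{\mathcal{H}(k^\circ)} \le \|f\|_{K^\circ[\mathcal{H}]}$.

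For the reverse inequality I would first normalise. If $f = 0$ there is nothing to prove; otherwise, since by Proposition \ref{prop:RationalityOfSeminorms} the seminorm $\|\cdot\|_{K^\circ[\mathcal{H}]}$ takes values in $|K|$, there is $\lambda_0 \in K^\times$ with $|\lambda_0| = \|f\|_{K^\circ[\mathcal{H}]}$ and $f/\lambda_0 \in K^\circ[\mathcal{H}]$. Both members of the desired equality are homogeneous of degree $1$ in $f$, so replacing $f$ by $f/\lambda_0$ we may assume $\|f\|_{K^\circ[\mathcal{H}]} = 1$; concretely this means $f \in K^\circ[\mathcal{H}]$ while $f/\lambda \notin K^\circ[\mathcal{H}]$ for every $\lambda$ with $|\lambda| < 1$. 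Under this normalisation the reduction $\tilde{f}$ of $f$ modulo the maximal ideal of $K^\circ$, viewed as a regular function on the special fibre $\tilde{H} \times_{\tilde{k}} \tilde{K}$ (where $\tilde{K}$ is the residue field of $K$), is non-zero: were it zero, $f$ would be a finite combination $\sum_i a_i g_i$ with each $a_i$ in the maximal ideal and $g_i \in K^\circ[\mathcal{H}]$, and dividing by the coefficient $a$ of largest absolute value (which lies in $|K|$ and satisfies $|a| < 1$) would exhibit $f/a \in K^\circ[\mathcal{H}]$, a contradiction.

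It then remains to produce a point $h \in \mathcal{H}(k^\circ)$ with $|f(h)| = 1$, for then $\|f\|_{\mathcal{H}(k^\circ)} \ge 1 = \|f\|_{K^\circ[\mathcal{H}]}$ and we conclude. The idea is to find a $\tilde{k}$-rational point of $\tilde{H}$ at which $\tilde{f}$ does not vanish and to lift it. Writing $\tilde{f} = \sum_i c_i g_i$ with $c_i \in \tilde{K}$ linearly independent over $\tilde{k}$ and $g_i \in \tilde{k}[\tilde{H}]$ not all zero, for any $\tilde{h} \in \tilde{H}(\tilde{k})$ the scalars $g_i(\tilde{h})$ lie in $\tilde{k}$, so the linear independence of the $c_i$ forces $\tilde{f}(\tilde{h}) = 0$ if and only if every $g_i(\tilde{h}) = 0$. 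Since some $g_{i_0}$ is non-zero and, $\tilde{k}$ being infinite and $\tilde{H}$ unirational, the set $\tilde{H}(\tilde{k})$ is Zariski-dense in $\tilde{H}$ \cite[Theorem 1.1]{ConradUnirationality}, there is $\tilde{h} \in \tilde{H}(\tilde{k})$ with $g_{i_0}(\tilde{h}) \ne 0$, hence $\tilde{f}(\tilde{h}) \ne 0$. By smoothness of $\mathcal{H}$ and Hensel's Lemma this point lifts to some $h \in \mathcal{H}(k^\circ)$, and evaluating $f$ at $h$ gives an element of $K^\circ$ whose reduction is $\tilde{f}(\tilde{h}) \ne 0$, so that $|f(h)| = 1$.

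The main obstacle is the passage from the non-vanishing of $\tilde{f}$ over the (possibly larger) residue field $\tilde{K}$ to the existence of a $\tilde{k}$-rational non-vanishing point: this is where both hypotheses intervene, through the linear-independence argument separating the $\tilde{K}$-coefficients from the $\tilde{k}$-geometry and through the Zariski-density of $\tilde{H}(\tilde{k})$ furnished by unirationality together with the infinitude of $\tilde{k}$. The rationality of the seminorm (Proposition \ref{prop:RationalityOfSeminorms}) is what makes the normalisation step legitimate when the valuation of $K$ is dense.
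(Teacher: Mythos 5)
Your proof is correct and follows essentially the same route as the paper's: normalise to $\| f\|_{K^\circ[\mathcal{H}]}=1$ using Proposition \ref{prop:RationalityOfSeminorms}, observe that the reduction $\tilde{f}$ is non-zero, produce a $\tilde{k}$-rational point where it does not vanish via the Zariski-density of $\tilde{H}(\tilde{k})$ (unirationality plus $\tilde{k}$ infinite), and lift by Hensel's Lemma. The only difference is that you make explicit two steps the paper leaves implicit — why $\tilde{f}\neq 0$ under the normalisation, and the linear-independence argument showing that density of $\tilde{H}(\tilde{k})$ suffices to find a non-vanishing point for a function defined over the larger residue field $\tilde{K}$ — which is a welcome clarification rather than a departure.
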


\begin{proof}[Proof of the Lemma] Since the norm $\| \cdot \|_{K^\circ[\mathcal{H}]}$ takes values in $|K|$, we may assume $\| f \|_{K^\circ[\mathcal{H}]} = 1$. With this hypothesis for all points $h \in \mathcal{H}(k^\circ)$ we have $|f(h)| \le 1$, thus proving the lemma amounts to find $h \mathcal{H}(k^\circ)$ such that $|f(h)| = 1$. 

Let $\tilde{H}$ be the special fibre of $\mathcal{H}$ and let $\tilde{K}[\tilde{H}]$ be the $\tilde{K}$-algebra of regular functions on $\tilde{H}_{\tilde{K}} \df \tilde{H} \times_{\tilde{k}} \tilde{K}$. With this notation the function $f$ belongs to $K^\circ[\mathcal{H}]$ and its reduction $\tilde{f} \in \tilde{K}[\tilde{H}]$ is non-zero. Since the field $\tilde{k}$ is infinite and $\tilde{H}$ is supposed to be unirational, the set of $\tilde{k}$-rationals points $\tilde{H}(\tilde{k})$ is Zariski-dense in $\tilde{H}_{\tilde{K}}$. Therefore there exists a $\tilde{k}$-rational point of $\tilde{H}$ on which $\tilde{f}$ does not vanish. Since $\mathcal{H}$ is smooth, we can lift such a point to a point $h \in \mathcal{H}(k^\circ)$ by means of Hensel's Lemma. Clearly $h$ is the point that we were looking for.
\end{proof}

\begin{remark} In the proof of the preceding proposition we showed that $k^\circ[\mathcal{H}]$ is the $k^\circ$-subalgebra of $k[H]$ made of regular functions $f$ such that $|f(g)| \le 1$ for all $h \in \mathcal{H}(k^\circ)$. Adopting the terminology of Bruhat-Tits \cite[D\'efinition 1.7.1]{BruhatTitsII}, one would say that the $k^\circ$-scheme $\mathcal{H}$ is \textit{\'etoff\'e}.
\end{remark}

\subsection{Proof of the Theorem} Let us complete the proof of Theorem \ref{Thm:HyperspecialSubgroupIntro} when $G$ is not quasi-split. Let us recall that if $\mathcal{G}$ is not quasi-split then the residue field $\tilde{k}$ is necessarily infinite (see Proposition \ref{Prop:GroupIsQuasiSplit}). Let us begin with the following technical result:

\begin{lemma} \label{lemma:EnvelopingBoundedSubgroup} Let $H$ be a bounded subgroup of $G(k)$. Then, there are an analytic extension $K$ of $k$ and a faithful representation $\rho \colon G_K \to \GL_{n, K}$ such that
$$ \rho(H) \subset \GL_n(K^\circ). $$
Moreover, if the valuation of $k$ is discrete one can take $K = k$.
\end{lemma}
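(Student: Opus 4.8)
The plan is to realise $G$ inside some general linear group and then to move the bounded subgroup into the integral points by changing lattice. Since $\mathcal{G}$, hence $G$, is affine, I would fix a faithful representation, i.e. a closed immersion of $k$-groups $\rho_0 \colon G \hookrightarrow \GL_{N, k}$ for some $N$. Realising $\GL_N$ as a closed subscheme of $\End(k^N) \times_k \End(k^N)$ through $g \mapsto (g, g^{-1})$ (as in the proof of Lemma~\ref{Prop:BoundednessStabiliserNorm}), the boundedness of $H$ transfers to $\rho_0(H)$: regular functions are bounded on bounded subsets, so there is a constant $C \ge 1$ with $|\rho_0(h)_{ij}| \le C$ for all $h \in H$ and all $i, j$. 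Consequently
$$ \| v \|_H \df \sup_{h \in H} \| \rho_0(h) \cdot v \|_{\max}, \qquad v \in k^N, $$
(with $\| \cdot \|_{\max}$ the maximum of the absolute values of the coordinates) is a norm on $k^N$ satisfying $\|v\|_{\max} \le \|v\|_H \le C \|v\|_{\max}$ and invariant under $\rho_0(H)$.

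If the valuation of $k$ is discrete I would take $K = k$. Then $k^\circ$ is a principal ideal domain and the unit ball $\Lambda = \{ v \in k^N : \|v\|_H \le 1 \}$ is a $k^\circ$-submodule squeezed between two lattices; being finitely generated and torsion-free, it is free of rank $N$. As $\rho_0(H)$ preserves $\| \cdot \|_H$ it preserves $\Lambda$, so expressing its elements in a $k^\circ$-basis of $\Lambda$ amounts to conjugating by some $P \in \GL_N(k)$, after which $P^{-1} \rho_0(H) P \subset \GL_N(k^\circ)$. The faithful representation $\rho = \mathrm{inn}_{P^{-1}} \circ \rho_0$ then satisfies $\rho(H) \subset \GL_N(k^\circ)$.

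In general I would extend scalars to an analytic extension $K$ that is spherically complete with value group $|K^\times| = \R_{>0}$; such an extension exists by standard valuation theory (a maximally complete field with value group $\R$ containing $k$). Over such a $K$ every finite-dimensional normed space admits an orthogonal basis, and since every positive real is attained as an absolute value this basis may be rescaled to an orthonormal one. Applying this to $\| v \|_{H, K} \df \sup_{h \in H} \| \rho_0(h) \cdot v \|_{\max}$ on $K^N$ — finite and $\rho_0(H)$-invariant for the same reason as above — yields a basis in which $\| \cdot \|_{H, K}$ is the standard maximum norm, equivalently a free $\rho_0(H)$-stable $K^\circ$-lattice. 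The associated change of basis $P \in \GL_N(K)$ conjugates $\rho_0(H)$ into $\GL_N(K^\circ)$, and $\rho = \mathrm{inn}_{P^{-1}} \circ \rho_{0, K} \colon G_K \to \GL_{N, K}$ is the sought faithful representation.

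The routine parts are the transfer of boundedness and the discrete-valuation lattice argument. The real content — and the only place where non-discreteness forces extra work — is the last paragraph: producing a free, $\rho_0(H)$-stable integral lattice over a valuation ring that is not a principal ideal domain. I expect the main obstacle to be isolating the two properties of $K$ that make this possible, namely spherical completeness (to orthogonalise the invariant norm) and surjectivity of $| \cdot |$ onto $\R_{>0}$ (to rescale the orthogonal basis to an integral one), and checking that an analytic extension enjoying both is available; everything else then follows formally.
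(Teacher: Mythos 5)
Your proposal is correct and follows essentially the same route as the paper: a lattice argument over the PID $k^\circ$ in the discrete case (you use the unit ball $\bigcap_{h} \rho_0(h)^{-1}(k^\circ)^N$ where the paper uses $\sum_h \rho_0(h)(k^\circ)^N$, an immaterial difference), and in the dense case passage to a maximally (spherically) complete extension to orthogonalise the $H$-invariant sup-norm and rescale it to a $K^\circ$-lattice, exactly as in the paper via \cite[2.4.1 and 2.4.4]{BGR}. The only cosmetic divergence is that you impose $|K^\times|=\R_{>0}$ at the outset, whereas the paper first diagonalises and then enlarges $K$ so that the finitely many radii $r_i$ lie in $|K^\times|$; both are legitimate.
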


We postpone the proof of the previous Lemma to the end of the proof of Theorem \ref{Thm:HyperspecialSubgroupIntro}. Let $H$ be a bounded subgroup of $G(k)$ containing $\mathcal{G}(k^\circ)$ and let $K$ and $\rho$ be as in the statement of the previous lemma. Up to extending $K$ we may suppose that $\mathcal{G}_{K^\circ}$ is split.

Since $\rho$ is a closed immersion, the $K$-holomorphically convex envelope of $\rho(H)$ coincides with $\rho(\hat{H}_K)$ (see Proposition \ref{prop:HolomorphicallyConvexEnvelopesClosedImmersions}). Therefore, by the preceding Lemma,
$$ \rho(\hat{H}_K) \subset \widehat{\GL_{n}(K^\circ)}_K = \GL_{n}(K^\circ),$$
where the last equality follows from Proposition \ref{Prop:HolomorphicallyConvexEnvelopeRationalPoints} applied to $\mathcal{H} = \GL_{n, K^\circ}$.  We have therefore the following chain of inclusions :
$$ \mathcal{G}(K^\circ) = \widehat{\mathcal{G}(k^\circ)}_K \subset \hat{H}_K \subset \rho^{-1}(\GL_n(K^\circ)),$$
where the first equality is given by Proposition \ref{Prop:HolomorphicallyConvexEnvelopeRationalPoints} applied with $\mathcal{H} = \mathcal{G}$. Now we can conclude thanks to Theorem \ref{Thm:HyperspecialSubgroupIntro} in the split case: indeed, $\rho^{-1}(\GL_n(K^\circ))$ is a bounded subgroup containing $\mathcal{G}(K^\circ)$ and since $\mathcal{G}_{K^\circ}$ is split by hypothesis, we have
$$ \mathcal{G}(K^\circ) = \hat{H}_K = \rho^{-1}(\GL_n(K^\circ)),$$
which concludes the proof of Theorem \ref{Thm:HyperspecialSubgroupIntro}.
\qed

\bigskip Let us finally prove Lemma \ref{lemma:EnvelopingBoundedSubgroup}:

\begin{proof}[{Proof of Lemma \ref{lemma:EnvelopingBoundedSubgroup}}] Let us first suppose that the valuation of $k$ is discrete and let $\rho_0 \colon G \to \GL_{n, k}$ be any faithful representation and let $\mathcal{E}_0 \df (k^\circ)^n$. Then, the $k^\circ$-submodule of $k^n$,
$$ \mathcal{E} \df \sum_{h \in H} h \cdot \mathcal{E}_0,$$
is bounded (as a subset of $K^n$) because $H$ is bounded. In particular, there exists $\lambda \in k^\times$ such that $\mathcal{E} \subset \lambda \mathcal{E}_0$. Since $k^\circ$ is noetherian, every submodule of $\mathcal{E}_0$ is finitely generated. Thus $\mathcal{E}$ is a torsion-free, finitely generated $k^\circ$-module such that $\mathcal{E} \otimes_{k^\circ} k = k^n$ (it contains $\mathcal{E}_0$). In other words, $\mathcal{E}$ is a lattice of $k^n$ and thus there exists $g \in \GL_n(k)$ such that $g \cdot \mathcal{E} = \mathcal{E}_0$. One concludes by setting $ \rho \df g \rho_0 g^{-1}$. 

If the valuation is not discrete (or, more precisely, if the field $k$ is not maximally complete) some further work is required because of the existence of norms that are not ``diagonalisable''. Let $\rho_0 \colon G \to \GL_{n, k}$ be any faithful representation as before, $K$ a maximally complete extension of $k$ and let us consider the norm on $K^n$,
$$ \| (x_1, \dots, x_n)\|_0 \df \max \{ |x_1|, \dots, |x_n|\}. $$
Since the subgroup $H$ is bounded, the function
$$ \| x \| \df \sup_{h \in H} \| h \cdot x\|_0,$$
is real-valued and it is a norm on $K^n$ verifying the non-archimedean triangle inequality. Since $K$ is maximally complete, there exists a basis $v_1, \dots, v_n$ of $K^n$ and positive real-numbers $r_1, \dots, r_n$ such that
$$ \| x_1 v_1 + \cdots + x_n v_n\| = \max \{ r_1 |x_1| , \dots, r_n |x_n|\}, $$
for all $x_1, \dots, x_n \in K$ \cite[2.4.1 Definition 1 and 2.4.4 Proposition 2]{BGR}. Up to extending further $K$, we may assume that the real numbers $r_1, \dots, r_n$ belong to the value group of $K$. Thus, up to rescaling the basis, we may suppose $r_i = 1$ for all $i$, so that the norm $\| \cdot \|$ is associated with a $K^\circ$-lattice of $K^n$. One finishes the proof as in the discretely-valued case.
\end{proof}

\appendix

\section{Semi-norm associated to an integral model} \label{sec:SeminormsAndIntegralModels} Let $\mathcal{A}$ be a torsion-free $k^\circ$-algebra of finite type and let $A \df \mathcal{A} \otimes_{k^\circ} k$. Since $\mathcal{A}$ is torsion-free, it injects in $A$ and we shall freely consider it as a subset of $A$. For every $f \in A$ we set
$$ \| f \|_\mathcal{A} \df \inf \{ |\lambda| : f/\lambda \in \mathcal{A} \textup{ for all } \lambda \in k^\times \}. $$

\begin{proposition} \label{prop:RationalityOfSeminorms} The semi-norm $\| \cdot \|_\mathcal{A}$ takes values in $|k|$.
\end{proposition}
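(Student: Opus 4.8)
The plan is to split the statement into an algebraic reduction, which rewrites $\|\cdot\|_{\mathcal{A}}$ as a residue (quotient) Gauss-seminorm, and an analytic core, where completeness of $k$ is what forces the value into $|k|$. First I would record the homogeneity $\|\lambda f\|_{\mathcal{A}}=|\lambda|\,\|f\|_{\mathcal{A}}$ together with the elementary observation that, for $f\in\mathcal{A}$, one has $\|f\|_{\mathcal{A}}=1$ exactly when the reduction $\tilde f\in\tilde{\mathcal{A}}\df\mathcal{A}\otimes_{k^\circ}\tilde k$ is non-zero: indeed $f/\lambda\in\mathcal{A}$ with $|\lambda|<1$ forces $f\in k^{\circ\circ}\mathcal{A}$. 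By homogeneity the conclusion $\|f\|_{\mathcal{A}}\in|k|$ is then equivalent to saying that the $k^\circ$-submodule $C\df\{c\in k:cf\in\mathcal{A}\}$ of $k$ satisfies $\sup_{c\in C}|c|\in|k|\cup\{\infty\}$, the value $\|f\|_{\mathcal{A}}$ being the reciprocal of this supremum (the case $f=0$, and the case where the supremum is $\infty$, both give the value $0\in|k|$). Concretely I aim to produce a \emph{primitive rescaling}, i.e. a $\lambda\in k^\times$ with $f/\lambda\in\mathcal{A}$ and $\widetilde{f/\lambda}\neq0$, which attains the supremum.

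Next I would linearise. Choosing a presentation $\mathcal{A}=k^\circ[x_1,\dots,x_n]/J_0$ (possible since $\mathcal{A}$ is of finite type) and setting $R=k[x_1,\dots,x_n]$, $J=J_0\cdot R$, one gets $A=R/J$; torsion-freeness says precisely that $J_0=J\cap k^\circ[x]$. A direct computation with the Gauss norm $\|\sum_\alpha c_\alpha x^\alpha\|\df\max_\alpha|c_\alpha|$ shows, for any lift $F\in R$ of $f$, that $f/\lambda\in\mathcal{A}$ holds if and only if $\inf_{G\in J}\|F-G\|\le|\lambda|$ (use that $f/\lambda\in\mathcal{A}$ means $F/\lambda\in k^\circ[x]+J$, and that $J$ is stable under multiplication by $k$); hence
\[ \|f\|_{\mathcal{A}}=\inf_{G\in J}\|F-G\|, \]
the residue Gauss-seminorm of $f$ on $R/J$. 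The decisive structural point is that $R$ is Noetherian, so $J$ is finitely generated. Moreover each individual $\|F-G\|$ is a maximum of finitely many $|c_\alpha|$, hence already lies in $|k|$; it therefore suffices to prove that the infimum above is \emph{attained} (or equals $0$).

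When the valuation of $k$ is discrete this is immediate, since $|k|$ is discrete and the infimum of a non-empty subset of $|k|$ bounded below is attained. The genuine difficulty is the densely valued case, where the naive ``divide-out'' descent $\|F\|>\|F-G_1\|>\cdots$ may decrease strictly forever and a priori converge to a limit outside $|k|$. To control it I would pass to the graded reduction following Temkin: the $\R_{>0}$-graded ring $\widetilde R^{\,\mathrm{gr}}\df\bigoplus_{\rho}\{\|\cdot\|\le\rho\}/\{\|\cdot\|<\rho\}$ attached to $(R,\|\cdot\|)$ is the graded polynomial ring $\tilde k^{\mathrm{gr}}[\xi_1,\dots,\xi_n]$ over the graded residue field $\tilde k^{\mathrm{gr}}$, which is graded-Noetherian and whose non-zero homogeneous elements occur only in degrees $\rho\in|k^\times|$ (each $\xi_i$ has degree $\|x_i\|=1$). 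The graded reduction of the finitely generated ideal $J$ is then a finitely generated homogeneous ideal, and a graded standard-basis (Weierstrass-type) division of $F$ yields a normal form realising the infimum; since all homogeneous degrees that occur lie in $|k^\times|$, the resulting value falls in $|k^\times|\cup\{0\}=|k|$.

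The step I expect to be the main obstacle is exactly this termination/attainment in the dense case: unlike ordinary Gröbner reduction over a field, the real-degree of the leading form can descend through infinitely many values, so there is no naive well-ordering to invoke, and one must show that an infinite descent can occur \emph{only} when the infimum is $0$. This is where both finiteness (the graded initial ideal is finitely generated, by graded-Noetherianity of $\tilde k^{\mathrm{gr}}[\xi]$) and the completeness of $k$ enter in an essential way — either through the graded-reduction formalism above, or equivalently by passing to the Tate algebra $k\langle x\rangle$, in which $J$ generates a \emph{closed} ideal and the residue norm, which by density coincides with $\inf_{G\in J}\|F-G\|$, is attained. Everything preceding this point is bookkeeping; the analytic heart of the matter is ruling out a strictly infinite descent towards a positive limit not in $|k|$.
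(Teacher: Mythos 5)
Your Tate-algebra route is correct and is, at its analytic core, the same argument as the paper's: the paper chooses a surjection $k^\circ[t_1,\dots,t_n]\to\mathcal{A}$, completes, and uses the fact that the fibre of $f$ in the Tate algebra $k\{t_1,\dots,t_n\}$ contains an element of minimal Gauss norm (strict closedness of ideals of $T_n$, quoted there as the ``open mapping theorem'', [BGR, 1.1.5 and 5.2.7]), whence the value lies in $|k|$. Where you genuinely differ is in the reduction: the paper must first prove that the completed map $\hat{\mathcal{T}}\to\hat{\mathcal{A}}$ is surjective at the integral level (a Mittag--Leffler argument relegated to a footnote) in order to run its minimality contradiction, whereas your identity $\|f\|_{\mathcal{A}}=\inf_{G\in J}\|F-G\|$, obtained directly from torsion-freeness via $J_0=J\cap k^\circ[x]$, short-circuits this and is arguably cleaner; combined with the observation that each individual $\|F-G\|$ already lies in $|k|$, it isolates exactly the attainment question, which both proofs then settle by the same theorem on Tate algebras. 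Two caveats. First, topological closedness of $J\cdot k\{x_1,\dots,x_n\}$ is \emph{not} enough: you need the ideal to be \emph{strictly} closed (the distance to it is attained), which is a strictly stronger property and is the actual content of the BGR theorem; your phrasing ``closed \dots\ and the residue norm \dots\ is attained'' should cite strict closedness rather than suggest the second clause follows from the first. Second, your alternative graded-reduction route is only a sketch: the claim that a standard-basis division ``realises the infimum'' in the densely valued case is precisely the hard point (essentially equivalent to strict closedness), so as written it is not an independent proof. With the Tate-algebra route properly referenced, your argument is complete.
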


Since I am not able to point out a suitable reference, we sketch here a proof. Before giving the argument, let us fix some notation. Let $\hat{A}$ be the completion of $A$ with respect to the semi-norm $\| \cdot \|_\mathcal{A}$: we still denote by $\| \cdot \|_\mathcal{A}$ the semi-norm induced on $\hat{A}$. The completion $\hat{\mathcal{A}}$ of $\mathcal{A}$, seen as a $k^\circ$-subalgebra of $\hat{A}$, verifies the following chain of inclusions:
$$ \{ f \in \hat{A} : \| f\|_\mathcal{A} < 1\} \subset \hat{\mathcal{A}} \subset \{ f \in \hat{A} : \| f\|_\mathcal{A} \le 1\}. $$
(\textit{A posteriori}, once we know that the Proposition holds, the second inclusion will be an equality.)

When $\mathcal{A}$ is the ring of polynomials $k^\circ[t_1, \dots, t_n]$, the semi-norm $\|\cdot\|_\mathcal{A}$ is the Gauss norm on polynomials: explicitly, for a polynomial $f$ of the form $\sum_{\alpha \in \N^n} f_\alpha t_1^{\alpha_1} \cdots t_n^{\alpha_n}$, we have
$$ \left\| f \right\|_\mathcal{A} = \max_{\alpha \in \N^n} |f_\alpha|.$$ 
Thus the completion $\hat{A}$ is the so-called \textit{Tate algebra} $k \{ t_1, \dots, t_n \}$ and the semi-norm $\| \cdot \|_\mathcal{A}$ takes values in $|k|$. 

\begin{proof} The statement is trivial if the valuation is discrete, so let us suppose that the valuation is dense. Let 
$ \phi \colon \mathcal{T} = k^\circ[t_1, \dots, t_n] \to \mathcal{A}$
be a surjective homomorphism of $k^\circ$-algebras. We adopt for $\mathcal{T}$ notations similar to the ones for $\mathcal{A}$. The homomorphism $\phi$ induces a surjective\footnote{Because of the equalities $\hat{T} = \hat{\mathcal{T}} \otimes_{k^\circ} k$ and $\hat{A} = \hat{\mathcal{A}} \otimes_{k^\circ} k$, it suffices to show that the induced homomorphism $\phi \colon \hat{\mathcal{T}} \to \hat{\mathcal{A}}$ is surjective.

Let $\lambda \in k$ be a non-zero element such that $|\lambda| < 1$. For every positive integer $n$ let us set $\Lambda_n \df k^\circ / \lambda^n k^\circ$, $ \mathcal{A}_n \df \mathcal{A} \otimes_{k^\circ} \Lambda_n$ and $\mathcal{T}_n \df \mathcal{T} \otimes_{k^\circ} \Lambda_n$. The completion $\hat{\mathcal{T}}$ (resp. $\hat{\mathcal{A}}$) is naturally identified with the projective limit of the $\mathcal{T}_n$'s (resp. of the $\mathcal{A}_n$'s). For every $n$, let $\mathcal{I}_n$ be the kernel of the surjective homomorphism $\mathcal{T}_n \to \mathcal{A}_n$ induced by $\phi$. Then the exact sequence of projective systems,
$$  0 \too (\mathcal{I}_n)_n \too (\mathcal{T}_n)_n \too (\mathcal{A}_n)_n \too 0, $$
satisfies the Mittag-Leffler condition (even better, for every $n$ the map $\mathcal{I}_{n+1} \to \mathcal{I}_n$ is surjective). Therefore, the induced map between projective limits $\hat{\mathcal{T}} \to \hat{\mathcal{A}}$ is surjective. See \cite[Chapter 1, Lemma 3.1 and Exercise 3.15]{Liu}.}
and bounded\footnote{That is, for every $f \in \hat{T}$, we have $\| \phi(f)\|_\mathcal{A} \le \| f\|_\mathcal{T}$.} homomorphism of $k$-Banach algebras, $$ \hat{\phi} \colon \hat{T}  \too \hat{A}.$$

 The open mapping theorem shows that the norm $\| \cdot \|_\mathcal{T}$ attains a minimum on the subset made of elements $g \in \hat{T}$ such that $\hat{\phi}(g) = f$ \cite[1.1.5 Definition 1 and 5.2.7 Theorem 7]{BGR}. If such a minimum is attained in $g_0$, it suffices to show
$$ \| f \|_\mathcal{A} = \| g_0 \|_{\mathcal{T}}.$$

The inequality $\| f\|_{\mathcal{A}} \le \| g_0 \|_{\mathcal{T}}$ is clear because of the boundedness of the homomorphism $\hat{\phi}$. Let us suppose by contradiction $\| f\|_{\mathcal{A}} < \| g_0 \|_{\mathcal{T}}$. Up to rescaling $g_0$ we may suppose $\| g_0 \|_{\mathcal{T}} = 1$ (it is crucial here $\| \cdot \|_{\mathcal{T}}$ takes values in $|k|$). By density of the valuation, there exists $\lambda \in k$ such that $|\lambda| > 1$ and $\| \lambda f\|_\mathcal{A} < 1$ hence $\lambda f$ belongs to $\mathcal{A}$. Since $\hat{\phi}$ is surjective, there exists $g_1 \in \hat{\mathcal{T}}$ such that $\phi(g_1) = \lambda f$. Therefore, $\phi(g_1 / \lambda ) = f$ and
$$ \| g_1 / \lambda \|_\mathcal{T} < \| g_1 \|_\mathcal{T} \le 1,$$
contradicting the minimality of $g_0$.
\end{proof}

\small

\providecommand{\bysame}{\leavevmode\hbox to3em{\hrulefill}\thinspace}
\providecommand{\MR}{\relax\ifhmode\unskip\space\fi MR }
% \MRhref is called by the amsart/book/proc definition of \MR.
\providecommand{\MRhref}[2]{%
  \href{http://www.ams.org/mathscinet-getitem?mr=#1}{#2}
}
\providecommand{\href}[2]{#2}

\end{document}